\DeclareMathOperator{\im}{im}
\DeclareMathOperator{\Id}{id}
\DeclareMathOperator{\id}{id}
\DeclareMathOperator{\Hom}{Hom}
\DeclareMathOperator{\Ext}{Ext}
\newcommand{\smatrix}[1]{\left(\!\begin{smallmatrix} #1\end{smallmatrix}\!\right)}
\newcommand{\Tate}{\widehat{\Ext}}
\newcommand{\class}{\mathcal{C}}
\newtheorem{lemma}[equation]{Proposition}
\newtheorem{definition}[equation]{Definition}
\newtheorem{satz}[equation]{Theorem}
\newtheorem{korollar}[equation]{Corollary}
\theoremstyle{remark}
\newtheorem{bemerkung}[equation]{Remark}
\newtheorem{remark}[equation]{Remark}
\DeclareMathOperator{\tr}{tr}
\DeclareMathOperator{\coker}{coker}
\newcommand{\hochschild}{H\! H}
\newcommand{\B}{\mathcal{B}}
\newcommand{\A}{\mathcal{A}}
\newcommand{\rem}[1]{}
\numberwithin{equation}{section}
\begin{document}
\SelectTips{cm}{} 

\title[Secondary multiplication in Tate Cohomology of generalized quaternion groups]{Secondary multiplication in Tate Cohomology\\ of generalized quaternion groups}
\author{Martin Langer}

\begin{abstract}
Let $k$ be a field and let $G$ be a finite group. By a theorem of D.~Benson, H.~Krause and S.~Schwede, there is a canonical element in the Hochschild cohomology of the Tate cohomology $\gamma_G\in \hochschild^{3,-1} \hat{H}^*(G)$ with the following property: Given any graded  $\hat{H}^*(G)$-module $X$, the image of $\gamma_G$ in $\Ext^{3,-1}_{\hat{H}^*(G)} (X,X)$ is zero if and only if $X$ is isomorphic to a direct summand of $\smash{\hat{H}^*(G,M)}$ for some $kG$-module $M$. In particular, if $\gamma_G=0$ then every module is a direct summand of a realizable $\hat{H}^*(G)$-module.

We prove that the converse of that last statement is not true by studying in detail the case of generalized quaternion groups. Suppose that $k$ is a field of characteristic $2$ and $G$ is generalized quaternion of order $2^n$ with $n\geq 3$. We show that $\gamma_G$ is non-trivial for all $n$, but there is an $\hat{H}^*(G)$-module detecting this non-triviality if and only if $n=3$.
\end{abstract}
\maketitle

\section{Introduction}
Let $k$ be a field, $G$ a finite group, and let $\hat{H}^*(G)$ denote the graded Tate cohomology algebra of $G$ over $k$. The starting point of this paper is the following theorem of D.~Benson, H.~Krause and S.~Schwede:
\begin{satz}\label{bkstheorem} \cite{bks} 
There exists a canonical element in Hochschild cohomology of $\hat{H}^*(G)$
\[ \gamma_G\in \hochschild^{3,-1} \hat{H}^*(G), \]
such that for any graded $\hat{H}^*(G)$-module $X$, the following are equivalent:
\begin{itemize}
\item[(i)] The image of $\gamma_G$ in $\Ext^{3,-1}_{\hat{H}^*(G)} (X,X)$ is zero.
\item[(ii)] There exists a $kG$-module $M$ such that $X$ is a direct summand of the graded $\hat{H}^*(G)$-module $\hat{H}^*(G,M)$.
\end{itemize}
\end{satz}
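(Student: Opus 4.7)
The plan is to interpret this as an obstruction-theoretic statement about lifting modules over the homotopy ring of a DGA to modules over the DGA itself. The stable module category of $kG$ admits a DG enhancement in which the endomorphism DGA $\mathcal{E}$ of the trivial module $k$ satisfies $H^*\mathcal{E}\cong \hat{H}^*(G)$; under this identification the functor $M\mapsto \hat{H}^*(G,M)$ factors as first forming the $\mathcal{E}$-module $\mathcal{F}(k,M) := \underline{\Hom}^*_{kG}(k,M)$ in the enhancement and then taking $\pi_*$. So (ii) is equivalent to saying $X$ is a retract of $\pi_*Y$ for some $\mathcal{E}$-module $Y$ of the shape $\mathcal{F}(k,M)$.

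First, using Kadeishvili's transfer theorem, I would endow $A := \hat{H}^*(G)$ with a minimal $A_\infty$-structure $(m_2, m_3, \ldots)$ quasi-isomorphic to $\mathcal{E}$, with each $m_n$ of homological degree $2-n$. The ternary operation $m_3$ is a Hochschild cochain of bidegree $(3,-1)$; the $A_\infty$-relations at arity four force it to be a cocycle, and the class $\gamma_G := [m_3]\in \hochschild^{3,-1}\hat{H}^*(G)$ is independent of the chosen transfer. Next, for a graded $A$-module $X$, I would run the standard obstruction theory for $A_\infty$-module structures: inductively one tries to choose $m_n^X \colon A^{\otimes(n-1)}\otimes X\to X$ satisfying the module $A_\infty$-equations. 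The obstruction at the first non-trivial step lies in $\Ext^{3,-1}_{\hat{H}^*(G)}(X,X)$ and is precisely the image of $\gamma_G$; at every later step $n\geq 4$ any obstruction can be killed by modifying the previously chosen $m_{n-1}^X$. Consequently (i) is equivalent to the existence of an $\mathcal{E}$-module $Y$ with $\pi_*Y\cong X$.

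The main obstacle I expect is matching such abstract $\mathcal{E}$-module lifts with direct summands of $\hat{H}^*(G,M)$. One direction is easy: if $X$ is a summand of $\hat{H}^*(G,M)$, then (i) holds because a summand of a realizable module is realizable (take $Y$ to be the corresponding summand of $\mathcal{F}(k,M)$ after idempotent completion), and $\gamma_G$ is functorial. For the converse one uses that $\mathcal{E}$ is a compact generator of its derived category and already lies in the image of $M\mapsto \mathcal{F}(k,M)$ (namely for $M=k$), so every lift $Y$ of $X$ can, after expanding via triangles and coproducts, be exhibited as a retract of some $\mathcal{F}(k,M)$; passing to $\pi_*$ then produces the desired summand of $\hat{H}^*(G,M)$. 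The delicate point is to ensure that the single Hochschild class $\gamma_G$ — rather than an entire tower of higher $m_n$'s — genuinely controls the lifting obstruction for every module $X$ simultaneously, which is the technical heart of the Benson--Krause--Schwede result.
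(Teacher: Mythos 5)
Note first that the paper itself does not prove this theorem: it is quoted from Benson--Krause--Schwede \cite{bks}, and the paper only recalls (in \S 1.5, via the cycle-selection map $f_1$ and the homotopy $f_2$) the construction of $\gamma_G$, which is the content of your Kadeishvili/$m_3$ step and is consistent with it. There is therefore no in-paper proof of the equivalence (i)$\Leftrightarrow$(ii) to compare against; I can only assess the sketch on its own.

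The sketch has a genuine gap at exactly the place you flag as ``the technical heart,'' and the mechanism you propose does not close it. You claim that once $\gamma_G\cup\id_X\in\Ext^{3,-1}(X,X)$ vanishes, the higher $A_\infty$-module obstructions $m_n^X$ for $n\ge 4$ can always be killed by re-choosing previously constructed data, so that $X$ lifts to an honest $\mathcal{E}$-module $Y$ with $\pi_*Y\cong X$. This is not a routine $A_\infty$ induction step and is not true in the generality you need: if it held, you would conclude that $X$ \emph{itself} is realizable, which is strictly stronger than statement~(ii). Benson--Krause--Schwede formulate the conclusion as ``direct summand of a realizable module'' precisely because realizability of $X$ itself does not follow from the vanishing of the single class $\gamma_G\cup\id_X$. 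Their argument does not run the infinite $A_\infty$-module induction; it reinterprets $\gamma_G\cup\id_X$ in terms of matric Massey (Toda) products built from a free $\Lambda$-resolution of $X$ (this is the content of Propositions~\ref{mandmasseyproducts} and~\ref{propexactindeterminacy} in the present paper), and from the vanishing of those products one builds, via cones and a Postnikov-type tower in the triangulated category, a realizable object having $X$ as a direct summand --- not a lift of $X$. A smaller issue: in your ``easy'' direction you lift an idempotent of $\pi_*\mathcal{F}(k,M)$ to one of $\mathcal{F}(k,M)$, but idempotents on homotopy do not lift in general; the correct easy argument is simply additivity of $\gamma_G\cup\id_{(-)}$ over direct sums, applied to $\hat{H}^*(G,M)$, which is realizable and hence has vanishing obstruction.
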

Let us call an $\hat{H}^*(G)$-module \emph{realizable} if it is isomorphic to a module of the form $\hat{H}^*(G,M)$ for some $kG$-module $M$. As an immediate consequence we get the following:
\begin{korollar}
If $\gamma_G=0$, then every $\hat{H}^*(G)$-module is a direct summand of a realizable module.
\end{korollar}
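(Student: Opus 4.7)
The corollary is a one-line deduction from Theorem~\ref{bkstheorem}, so my plan is just to make the logical chain explicit. Given an arbitrary graded $\hat{H}^*(G)$-module $X$, I would invoke the canonical map
\[ \hochschild^{3,-1}\hat{H}^*(G) \longrightarrow \Ext^{3,-1}_{\hat{H}^*(G)}(X,X) \]
produced by the theorem. Since this map is $k$-linear (or at least sends $0$ to $0$), the hypothesis $\gamma_G = 0$ forces its image in $\Ext^{3,-1}_{\hat{H}^*(G)}(X,X)$ to vanish. That is precisely condition (i) of Theorem~\ref{bkstheorem}.

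Applying the equivalence (i)$\Leftrightarrow$(ii), I then conclude that there exists a $kG$-module $M$ such that $X$ is a direct summand of the realizable module $\hat{H}^*(G,M)$. Since $X$ was arbitrary, every graded $\hat{H}^*(G)$-module is a summand of a realizable one.

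There is no real obstacle here: the whole content of the corollary is absorbed into Theorem~\ref{bkstheorem}, and one only needs to observe that the universal obstruction $\gamma_G$ controls the obstructions for \emph{all} modules $X$ simultaneously via a single specialization map. The only subtlety worth a sentence in the write-up is the direction used, namely (i)$\Rightarrow$(ii); the converse direction of the theorem plays no role.
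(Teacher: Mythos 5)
Your argument is exactly the intended one: the paper presents the corollary as an immediate consequence of Theorem~\ref{bkstheorem} without further proof, and your spelled-out deduction (vanishing of $\gamma_G$ forces the image in $\Ext^{3,-1}_{\hat{H}^*(G)}(X,X)$ to vanish for every $X$, then apply (i)$\Rightarrow$(ii)) is precisely that consequence made explicit. No gap, no divergence from the paper's approach.
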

At this point it is natural to ask for the converse of that statement. That is, given the fact that $\gamma_G\neq 0$, is there some $\hat{H}^*(G)$-module detecting the non-triviality of $\gamma_G$? Theorem~\ref{bkstheorem} works more generally in the situation of differential graded algebras, and in that setup the converse of the corresponding corollary is known to the false: Benson, Krause and Schwede provide an example of a dg algebra $A$ such that the canonical class $\gamma_A\in\hochschild^{3,-1}(H^*A)$ is non-trivial, but every $H^*A$-module is realizable (see \cite{bks}, Proposition~5.16). Nevertheless, the author believes that the question whether there is such an example coming from Tate cohomology of groups is still open.

In this paper we will compute $\gamma_G$ explicitly for the generalized quaternion groups $G$. 
In what follows, let $t\geq 2$ be a power of $2$, and let $G=Q_{4t}$ be the group of generalized quaternions
\[ Q_{4t} = \left< g,h \,\mid\, g^t = h^2,\, ghg=h \right>. \]
Let $k$ be a field of characteristic $2$, and denote by $L=kG$ the group algebra of $G$ over $k$.
Then the Tate cohomology ring $\hat{H}^*(G)$ is well-known; it is given by
\[
\hat{H}^*(Q_{4t}) =\Tate_L^*(k,k) \cong \begin{cases}
   k[x,y,s^{\pm 1}]/(x^2+y^2=xy,y^3=0) & \text{if $t=2$, } \\
   k[x,y,s^{\pm 1}]/(x^2=xy,y^3=0) & \text{if $t\geq 4$, }
                  \end{cases}
\]
with degrees $|x|=|y|=1, |s|=4$ (see e.g.~\cite{CartEile},~Chapter~XII~\S~11, and \cite{AdemMilgram},~IV Lemma 2.10).
Our main goal is to prove the following theorem.
\begin{satz}\label{haupttheorem}
The element $\gamma_{Q_8}\in \hochschild^{3,-1} \hat{H}^*(Q_8)$ is non-trivial, and the cokernel of the map
\[ \hat{H}^*(Q_8)[-1] \oplus \hat{H}^*(Q_8)[-1] \xrightarrow{\smatrix{y & x+y \\ x & y}} \hat{H}^*(Q_8)\oplus \hat{H}^*(Q_8) \]
is a graded $\hat{H}^*(Q_8)$-module which is not a direct summand of a realizable one.
For $t\geq 4$ the element $\gamma_{Q_{4t}}\in \hochschild^{3,-1} \hat{H}^*(Q_{4t})$ is non-trivial, but
every graded $\hat{H}^*(Q_{4t})$-module is a direct summand of a realizable one.
\end{satz}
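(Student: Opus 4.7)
\medskip\noindent\textbf{Proof plan.}
The basic strategy is to make everything computable through an explicit minimal $A_\infty$-model on $\hat{H}^*(Q_{4t})$. I would start from the $4$-periodic projective resolution of $k$ over $kQ_{4t}$ (available because $Q_{4t}$ has periodic cohomology of period $4$), transfer an $A_\infty$-structure onto the bigraded Tate cohomology ring by standard homological perturbation, and identify the canonical class $\gamma_{Q_{4t}}$ with the resulting ternary product $m_3$, viewed as a Hochschild $3$-cocycle of bidegree $(3,-1)$.

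Non-triviality of $\gamma_{Q_{4t}}$ then reduces to computing $m_3$ on the generators $x,y$ of $\hat{H}^1$. Because $m_3$ satisfies a Leibniz-type rule with respect to ordinary multiplication, only a handful of Massey products need to be evaluated. The plan is to exhibit one such Massey product whose value is a non-zero scalar multiple of $s^{-1}$ times a monomial in $x,y$, and to argue by a direct dimension count that no Hochschild $2$-coboundary in bidegree $(2,-1)$ can cancel it. This should go through uniformly, with the distinction between $t=2$ and $t\geq 4$ entering only through the substitution of the relation $x^2+y^2=xy$ versus $x^2=xy$.

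For the detection of $\gamma_{Q_8}$ by the module $X = \coker\smatrix{y & x+y\\ x & y}$, the chosen matrix has determinant vanishing modulo $x^2+y^2=xy$, so its column relations can be prolonged to give an explicit minimal free resolution of $X$ in low degrees. Pairing this resolution with the representative of $\gamma_{Q_8}$ from the previous step produces a concrete $3$-cocycle representing the image of $\gamma_{Q_8}$ in $\Ext^{3,-1}_{\hat{H}^*(Q_8)}(X,X)$, and it remains to check by direct inspection that it is not a $3$-coboundary. The relevant space of $2$-cochains is small enough to be enumerated by hand.

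The hard part is the statement for $t\geq 4$: namely, that although $\gamma_{Q_{4t}}$ is non-zero in Hochschild cohomology, every graded $\hat{H}^*(Q_{4t})$-module is a direct summand of a realizable one. By Theorem~\ref{bkstheorem} this is equivalent to the vanishing of the image of $\gamma_{Q_{4t}}$ in $\Ext^{3,-1}_{\hat{H}^*(Q_{4t})}(X,X)$ for every such $X$. I would first reduce to finitely presented modules, and then, using the factorisation $x(x-y)=0$ that holds in the $t\geq 4$ case, attempt to construct — uniformly in $X$ — a Hochschild $2$-cochain whose boundary equals the cocycle representing the image of $\gamma_{Q_{4t}}$ on $X$. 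Producing this coboundary in an $X$-independent way is where the looser relation $x^2=xy$ is decisive, and is the step I expect to consume most of the technical effort.
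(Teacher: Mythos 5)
Your overall framework (explicit $A_\infty$-model / ternary product $m_3$ as the canonical class, Massey-product calculus for detection) is the one the paper uses, and the first two parts of your plan---nontriviality of $\gamma_{Q_{4t}}$ and detection for $Q_8$---go along lines close to the paper's. Two remarks there: for nontriviality the paper argues by assuming $m=\delta g$ and summing carefully chosen cocycle relations to get a contradiction (essentially your ``dimension count'' but made concrete), and for $Q_8$ the paper does not enumerate $2$-cochains directly: it passes to matric Massey products \`a la May and shows $m(A,A,A)=\smatrix{x^2 & 0\\ x^2 & x^2}$ is not of the form $AQ+RA$ by a short trace argument ($\tr(BD)=x^2y\neq 0$ with $D=\smatrix{x & y\\ x+y & x}$ annihilating $A$ on both sides). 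Your ``enumerate $2$-cochains by hand'' could in principle be made to work using $s$-periodicity, but the trace argument is the cleaner route and sidesteps the issue that the raw cochain space is infinite-dimensional.

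The genuine gap is in the $t\geq 4$ part. You propose to construct, \emph{uniformly in $X$}, a Hochschild $2$-cochain whose coboundary is the $3$-cocycle representing the image of $\gamma_{Q_{4t}}$ on $X$. As stated this cannot succeed: if a single Hochschild $2$-cochain $g$ with $\delta g=m$ existed, then $\gamma_{Q_{4t}}=0$ in $\hochschild^{3,-1}$, contradicting the nontriviality you just proved. The correct target is not a Hochschild coboundary but a coboundary in the complex computing $\Ext^{*,*}_{\hat H^*(Q_{4t})}(X,X)$ --- that is, for each $X$ separately one must show $0\in\langle A,B,C\rangle$ for a free resolution $\Lambda^I\xleftarrow{A}\Lambda^J\xleftarrow{B}\Lambda^K\xleftarrow{C}\Lambda^L$ of $X$, which is an $X$-dependent statement. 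The paper's mechanism for this is the crucial idea you are missing: it decomposes the representative as $m=m'+m''$ with $m'$ the $s$-periodic part, proves each of $m'$ and $m''$ is a Hochschild cocycle, and then kills $\gamma'\cup\id_M$ and $\gamma''\cup\id_M$ separately. For $\gamma'$ it chooses a \emph{minimal} free resolution (so the constant coefficients $A_1,B_1,C_1$ vanish, forcing e.g.\ $B_yC_y=0$) and exhibits explicit $V,W$ with $m'(A,B,C)=AV+WC$; for $\gamma''$ it first normalizes $m''$ by a Hochschild coboundary $\partial g$ to a representative $\tilde m$ that vanishes on all entries that can actually occur in a degree-normalized minimal resolution. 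Both steps lean on the relation $x^2=xy$ (equivalently $xz=0$), exactly as you guessed, but the module-by-module Massey-product argument with a minimal resolution is essential and is not recoverable from a uniform coboundary claim.
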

The plan is as follows: In the first section we will briefly recall the definitions needed in Theorem~\ref{bkstheorem}; most of this part is taken from \cite{bks}, and the reader interested in details should consult that source. In the second section we turn to the computation of a Hochschild cocycle $m$ representing the canonical class $\gamma_G$. In the third section we prove the statements about realizability of modules. Theorem~\ref{haupttheorem} will then follow from Theorems~\ref{mquat8}, \ref{mquat16}, \ref{mquat8module}, and Propositions~\ref{gammaprimecupmzero} and~\ref{gammadblprimecupmzero}.

\subsection*{Acknowledgments} Part of this paper is part of a Diploma thesis written at the Mathematical Institute, University of Bonn. I would like to thank my advisor Stefan Schwede for suggesting the subject and all the helpful comments on the project. I would also like to thank the referee of an earlier version of this paper for some helpful remarks.

\subsection{Notations and conventions}
All occurring modules will be right modules. We shall often work over a fixed ground field $k$; then $\otimes$ means tensor product over $k$. Whenever convenient, we write $(a_1,a_2,\dots,a_n)$ instead of $a_1\otimes a_2\otimes\dots\otimes a_n$. If $G$ is a group, then $k$ is often considered as a trivial $kG$-module. 
%When no confusion can arise, $k$ will be used as an index variable at the same time.

Let $R$ be a ring with unit, and let $M$ be a $\mathbb{Z}$-graded $R$-module. The degree of every (homogeneous) element $m\in M$ will be denoted by $|m|$. For every integer $n$ the module $M[n]$ is defined by $M[n]^j=M^{n+j}$ for all $j$. Given two such modules $M$ and $L$, a morphism $f:L\longrightarrow M$ is a family $f^j:L^j\longrightarrow M^j$ of $R$-module homomorphisms. The group of all these morphisms is denoted by $\Hom_R(L,M)$. Furthermore, we have $\Hom_R^m (L,M) = \Hom_R(L,M[m])$, the morphisms of degree $m$. The graded module $L\otimes M$ is given by $(L\otimes M)^m =\bigoplus_{i+j=m} L^i\otimes M^j$. If $M$ is a differential graded $R$-module with differential $d$, then the differential of $M[n]$ is given by $(-1)^n d$.

\subsection{Tate Cohomology}\label{tatekohomologie}
Let us recall briefly the definition and basic properties of Tate cohomology. Let $k$ be a field, and let $G$ be a finite group. Then $L=kG$ is a self-injective algebra (i.e.~the classes of projective and injective right-$L$-modules coincide). For any $L$-module $N$ we get a complete projective resolution $P_*$ of $N$ by splicing together a projective and an injective resolution of $N$:
\[
\xymatrix@!@=2pt{
\dots & &
P_{-2} \ar[ll] &&
P_{-1} \ar[ll] &&
P_{0} \ar[ll]\ar@{->>}[dl]  &&
P_{1} \ar[ll]  &&
P_{2} \ar[ll]  &&
\dots  \ar[ll]  \\
&&& & & N\ar@{_{(}->}[ul]
} \]
Given another $L$-module $M$, we can apply the functor $\Hom_L(-,M)$ to $P_*$; then Tate cohomology is defined to be the cohomology groups of the resulting complex:
\begin{equation}\label{tatedef} \Tate_L^n(N,M)=H^n(\Hom_L(P_*,M)) \quad\text{for all $n\in\mathbb{Z}$.}  \end{equation}
For arbitrary $L$-modules $X,Y$ and $Z$, we have a cup product
\begin{equation} \label{cupprodukt} \Tate_L^m(Y,Z)\otimes \Tate_L^n(X,Y) \longrightarrow \Tate_L^{m+n}(X,Z), \end{equation}
see e.g.~\cite{Carlson}, \S~6. Therefore, $\hat{H}^*(G)=\hat{H}^*(G,k)=\Tate_{kG}^*(k,k)$ is a graded algebra, and $\hat{H}^*(G,M)=\Tate_{kG}^*(k,M)$ is a graded $\hat{H}^*(G)$-module for every $kG$-module $M$. We call a graded $\hat{H}^*(G)$-module $X$ \emph{realizable} if it is isomorphic to $\hat{H}^*(G,M)$ for some $kG$-module $M$.

There is another way of describing the product of $\hat{H}^*(G)$, in terms of $P_*$. Consider the differential graded algebra $\A=\Hom_L^*(P_*,P_*)$, which (in degree $n$) is given by
\[ \A^n=\prod_{j\in\mathbb{Z}} \Hom_L(P_{j+n},P_j), \]
and the differential $d:\A^n\longrightarrow \A^{n+1}$ is defined to be
\[ (df)_j=\partial\circ f_{j+1}-(-1)^n f_j\circ \partial. \]
Here $\partial$ denotes the differential of $P_*$. $\A$ is called the endomorphism dga of $P$. With this definition, the cocycles of $\A$ (of degree $n$) are exactly the chain transformations $P[n]\rightarrow P$, and two cocycles differ by a coboundary if and only if they are chain homotopic. Using standard arguments from homological algebra, one shows that the following map is an isomorphism of $k$-vector spaces:
\begin{equation} \label{tateiso}
\begin{split}
 H^n \A &\stackrel{\cong}{\longrightarrow} \Tate^n_L(k,k) \\
 \left[f\right] &\;\mapsto\;  [ \epsilon\circ f_0]
\end{split}
\end{equation}
Here $\epsilon:P_0\longrightarrow k$ is the augmentation. This isomorphism is compatible with the multiplicative structures. We will often write $\bar{a}$ for elements of the endomorphism dga; if $\bar{a}$ is a cocycle, then $a$ denotes the corresponding cohomology class.

\subsection{Hochschild Cohomology}
We now give a short review of Hochschild cohomology. Let $\Lambda$ be a graded algebra over the field $k$, and let $M$ be a graded $\Lambda$-$\Lambda$-bimodule, the elements of $k$ acting symmetrically. Define a cochain complex $C^{\bullet,*}(\Lambda,M)$ by
\[ C^{n,m}(\Lambda,M)=\Hom_k^m(\Lambda^{\otimes n},M), \]
with a differential $\delta$ of bidegree $(1,0)$ given by
\begin{multline*} (\delta\varphi)(\lambda_1,\dots,\lambda_{n+1})=(-1)^{m|\lambda_1|}\lambda_1\varphi(\lambda_2,\dots,\lambda_{n+1}) \\
+\sum_{i=1}^n (-1)^i\varphi(\lambda_1,\dots,\lambda_i\lambda_{i+1},\dots,\lambda_{n+1}) +(-1)^{n+1}\varphi(\lambda_1,\dots,\lambda_n)\lambda_{n+1}.
\end{multline*}
The Hochschild cohomology groups $\hochschild^{*,*}(\Lambda,M)$ are defined as the cohomology groups of that complex:
\[ \hochschild^{s,t}(\Lambda,M)=H^s(C^{*,t}(\Lambda,M)). \]
In particular, we can regard $M=\Lambda$ as a bimodule over itself; then one writes $\hochschild^{s,t}(\Lambda)=\hochschild^{s,t}(\Lambda,\Lambda)$. For example, an element of $\hochschild^{3,-1}(\Lambda)$ is represented by a family of $k$-linear maps
\[ m=\{m_{i,j,l}:\Lambda^i\otimes\Lambda^j\otimes \Lambda^l\longrightarrow
        \Lambda^{i+j+l-1}\}_{i,j,l\in\mathbb{Z}} \]
satisfying the cocycle relation
\[ (-1)^{|a|}a\cdot m(b,c,d)-m(ab,c,d)+m(a,bc,d)-m(a,b,cd)+m(a,b,c)\cdot d=0 \]
for all $a,b,c,d\in\Lambda$.

Whenever $X$ and $Y$ are $\Lambda$-$\Lambda$-bimodules, one has a cup product pairing
\[ \cup:\Hom_\Lambda(X,Y)\otimes \hochschild^{*,*}\Lambda\longrightarrow \Ext^{*,*}_\Lambda(X,Y). \]
Here $\Ext^{s,t}_\Lambda(X,Y)$ is defined to be $\Ext^{s}_\Lambda(X,Y[t])$. In particular, we have the map
\begin{eqnarray*}
 \hochschild^{3,-1} \hat{H}^*(G) &\longrightarrow& \Ext^{3,-1}_{\hat{H}^*(G)} (X,X) \\
   \phi & \mapsto & \Id_X\cup\,\phi
\end{eqnarray*}
for every $\hat{H}^*(G)$-module $X$. This is the map occurring in the statement of Theorem~\ref{bkstheorem}.

\subsection{The canonical element $\gamma$} \label{kanonischel}
We are now going to describe the construction of the element $\gamma$ mentioned in Theorem~\ref{bkstheorem}. More generally, we will construct an element $\gamma_\A\in\hochschild^{3,-1} H^*\A$ for every differential graded algebra $\A$ over $k$; then we can take $\A$ to be the endomorphism algebra of a complete projective resolution of $k$ as a trivial $kG$-module to get $\gamma_G\in \hochschild^{3,-1} \hat{H}^*(G)$.

For a dg-algebra $\A$ consider $H^*\A$ as a differential graded $k$-module with trivial differential. Then choose a morphism of dg-$k$-modules $f_1:H^*\A\longrightarrow \A$ of degree $0$ which induces the identity in cohomology. This is the same as choosing a representative in $\A$ for every class in $H^*\A$ in a $k$-linear way. For every two elements $x,y\in H^*\A$, $f_1(xy)-f_1(x)f_1(y)$ is null-homotopic; therefore, we can choose a morphism of graded modules
\[ f_2:H^*\A\otimes H^*\A\longrightarrow \A \]
of degree $-1$ such that for all $x,y\in H^*\A$ we have
\[ df_2(x,y)=f_1(xy)-f_1(x)f_1(y). \]
Then for all $a,b,c\in H^*\A$,
\begin{equation} \label{mkonstruktion}
 f_2(a,b)f_1(c)-f_2(a,bc)+f_2(ab,c)-(-1)^{|a|}f_1(a)f_2(b,c) 
\end{equation}
is a cocycle in $\A$, the cohomology class of which will be denoted by $m(a,b,c)$. This defines a map $m:(H^*\A)^{\otimes 3}\longrightarrow H^*\A$ of degree $-1$. An explicit computation shows that $m$ is a Hochschild cocycle, thereby representing a class $\gamma_A\in\hochschild^{3,-1} H^*\A$. This class is independent of the choices made.

\section{Computation of the canonical element}
From now on, let $k$ be a field of characteristic $2$. Let $t\geq 2$ be a power of $2$, and let $G=Q_{4t}$ be the group of generalized quaternions
\[ Q_{4t} = \left< g,h \,\mid\, g^t = h^2,\, ghg=h \right>. \]
We denote by $kG$ the group algebra of $G$ over $k$, and $F=kG$ denotes the free module of rank~$1$ over that algebra. In this section, we are going to explicitly compute a Hochschild cochain $m$ representing the canonical class $\gamma_G$.

\subsection{The class of a map}\label{sklasse}
We begin with an observation that will reduce the subsequent computations somewhat. Let us recall the construction of a representative of $\gamma_{G}$. First of all, we have to construct a projective resolution $P$, and we will actually find a minimal projective resolution. Then we have to choose a cycle selection-homomorphism
$ f_1:\hat{H}^*(G)\rightarrow \Hom_{kG}^*(P,P) $
such that any class $a$ is mapped to a representative $f_1(a)$. We can find a $k$-linear map
$ f_2:\hat{H}^*(G)\otimes\hat{H}^*(G)\rightarrow \Hom_{kG}^*(P,P) $
of degree $-1$ satisfying
$ df_2(a,b)=f_1(a)f_1(b)-f_1(ab) $
for all $a,b$. Finally, we are interested in terms of the form
\begin{equation}\label{kozykelterm} f_2(a,b)f_1(c)+f_2(a,bc)+f_2(ab,c)+f_1(a)f_2(b,c); \end{equation}
this is a cocycle in $\Hom_{kG}^*(P,P)$. In order to determine the class of this cocycle, it is enough to know the degree $0$ map of it (cf.~\eqref{tateiso}). This observation leads to the following definition.
\begin{definition}
For every $f\in \Hom^n_{kG}(P,P)$, i.e.,~a family of maps $f_j:P_{j+n}\rightarrow P_j$ ($j\in\mathbb{Z}$), not necessarily commuting with the differential, we denote by $\class(f)$ the class of the map 
$\epsilon \circ f_0: P_n\rightarrow k$ in $H^n \Hom_{kG}(P_*,k)=\hat{H}^*(G)$. 
\end{definition}
Note that the complex $\Hom_{kG}(P_*,k)$ has trivial differential; thus, every element in $\Hom_{kG}(P_*,k)$ and in particular $\epsilon\circ f_0$ is a cocycle.
The definition above gives a map
\[
\boxed{
\begin{aligned}
 \class:\Hom^n_{kG}(P,P) &\longrightarrow  \hat{H}^n(G) \\
  f & \;\mapsto\;  [\epsilon\circ f_0]
\end{aligned}
}
\]
\begin{lemma} \label{classlemma}
The map $\class$ has the following properties:
\begin{enumerate}
\item[(i)] If $f\in\Hom^n_{kG}(P,P)$ is a cocycle, then $\class(f)$ is the cohomology class of $f$; in particular $\class\circ f_1=\Id$.
\item[(ii)] The map $\class$ is $k$-linear.
\item[(iii)] If $\class(f_1)=\class(f_2)$ for some $f_1,f_2\in\Hom^n_{kG}(P,P)$, then $\class(f_1g)=\class(f_2g)$ for all $g\in\Hom^m_{kG}(P,P)$.
\item[(iv)] If $a\in\Hom^m_{kG}(P,P)$ is a cocycle and $f\in\Hom^n_{kG}(P,P)$ is arbitrary, then $\class(fa)=\class(f)\class(a)$.
\end{enumerate}
\end{lemma}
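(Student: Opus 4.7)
The plan is to establish the four parts sequentially, with (iv) being reduced to the earlier ones via (iii).

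Parts (i) and (ii) are essentially bookkeeping. For (i) I would observe that the formula $\class(f)=[\epsilon\circ f_0]$ is precisely the image of $[f]\in H^n\A$ under the isomorphism \eqref{tateiso}; when $f$ is a cocycle, $\class(f)$ is therefore its cohomology class. Since $f_1(a)$ is by construction a cocycle representing $a$, this gives $\class\circ f_1=\Id$. Part (ii) is immediate from the $k$-linearity of $\epsilon$ and of post-composition with $\epsilon$.

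For (iii), the decisive point is the minimality of $P$: every differential $\partial:P_j\to P_{j-1}$ has image contained in $J\cdot P_{j-1}$, where $J$ denotes the augmentation ideal of $kG$, and hence any $kG$-linear map $P_{j-1}\to k$ vanishes after pre-composition with $\partial$. Consequently the cochain complex $\Hom_{kG}(P_*,k)$ has trivial differential, so an equality $\class(f_1)=\class(f_2)$ in $\hat{H}^n(G)$ is actually an honest equality of $k$-linear maps $\epsilon\circ(f_1)_0=\epsilon\circ(f_2)_0:P_n\to k$. Using the composition convention $(f_ig)_0=(f_i)_0\circ g_n$, pre-composing both sides with $g_n$ and post-composing with $\epsilon$ yields $\class(f_1g)=\class(f_2g)$.

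For (iv), I would reduce to the case where $f$ is itself a cocycle. Pick the cocycle representative $\tilde{f}:=f_1(\class(f))$ of the class $\class(f)\in\hat{H}^n(G)$; by (i) we have $\class(\tilde{f})=\class(f)$, so (iii) gives $\class(fa)=\class(\tilde{f}a)$. Since $\tilde{f}$ and $a$ are both cocycles in $\A$, so is their product $\tilde{f}a$, and the multiplicativity of the isomorphism \eqref{tateiso} yields $\class(\tilde{f}a)=\class(\tilde{f})\cdot\class(a)=\class(f)\cdot\class(a)$. The only real obstacle is (iii): once one notices that minimality of $P$ collapses the cohomology of $\Hom_{kG}(P_*,k)$ to the module of $k$-linear maps $P_n\to k$ itself, everything else is a routine unpacking of the definition of $\class$ combined with the multiplicativity of \eqref{tateiso}.
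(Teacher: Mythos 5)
Your proof is correct and follows essentially the same route as the paper: (i) and (ii) by unwinding \eqref{tateiso} and $k$-linearity, (iii) by reducing to the fact that minimality of $P$ forces $\Hom_{kG}(P_*,k)$ to have trivial differential so that $\class$-equality is literal equality of maps, and (iv) by replacing $f$ with a cocycle representative and invoking (iii) together with multiplicativity of \eqref{tateiso}. The only difference is cosmetic: you spell out the minimality argument that the paper states implicitly just before the definition of $\class$, and you choose the particular cocycle $f_1(\class(f))$ where the paper just asserts the existence of some cocycle $h$ with $\class(h)=\class(f)$.
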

\begin{proof}
(i) follows from \eqref{tateiso}, (ii) holds by definition. \\
Ad (iii): If $\class(f_i)=0$, then $\epsilon\circ f_i=0$. This implies $\epsilon\circ f_i\circ g=0$, hence $\class(f_ig)=0$. For general $f_1,f_2$ note $\class(f_1-f_2)=0$; by what we just proved $\class((f_1-f_2)g)=0$ and therefore $\class(f_1g)=\class(f_2g)$.\\
Ad (iv): Choose a cocycle $h\in\Hom^n_{kG}(P,P)$ satisfying $\class(h)=\class(f)$. Then by (iii)
\[ \class(fa)=\class(ha)=\class(h)\class(a)=\class(f)\class(a). \qedhere \]
\end{proof}

The following corollary will simplify computations later on.
\begin{lemma} \label{vereinfachlemma}
The map $f_2$ can be chosen in such a way that $\class\circ f_2=0$.
\end{lemma}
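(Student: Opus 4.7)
The plan is to modify a given $f_2$ by a cocycle-valued correction term so that its class becomes zero, without destroying the defining relation $df_2(a,b)=f_1(a)f_1(b)-f_1(ab)$.

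Start from any $k$-bilinear map $f_2:\hat{H}^*(G)\otimes\hat{H}^*(G)\longrightarrow \Hom^*_{kG}(P,P)$ of degree $-1$ satisfying that relation, as constructed in Section~\ref{kanonischel}. For homogeneous $a,b\in \hat{H}^*(G)$, the element $\class(f_2(a,b))\in \hat{H}^{|a|+|b|-1}(G)$ is defined, and by part~(ii) of Lemma~\ref{classlemma} the assignment $(a,b)\mapsto \class(f_2(a,b))$ is $k$-bilinear. I would then define
\[ f_2'(a,b) \;=\; f_2(a,b) - f_1\bigl(\class(f_2(a,b))\bigr). \]
Since $f_1$ and $\class$ are $k$-linear, so is $f_2'$, and it has the correct degree $-1$.

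Two checks remain. First, because $f_1$ takes values in cocycles of $\Hom^*_{kG}(P,P)$, the correction term $f_1(\class(f_2(a,b)))$ is a cocycle, so
\[ df_2'(a,b) \;=\; df_2(a,b) \;=\; f_1(a)f_1(b) - f_1(ab), \]
showing $f_2'$ still represents the desired null-homotopy. Second, by part~(i) of Lemma~\ref{classlemma} we have $\class\circ f_1=\Id$, hence $\class\bigl(f_1(\class(f_2(a,b)))\bigr)=\class(f_2(a,b))$; combined with $k$-linearity of $\class$ this gives $\class(f_2'(a,b))=0$.

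There is essentially no obstacle: the result is a direct one-step correction made possible by the facts that $f_1$ lands in cocycles and $\class\circ f_1=\Id$, so that subtracting $f_1\circ\class\circ f_2$ kills the class while leaving the coboundary unchanged. The only thing to be mindful of is preserving $k$-linearity of the new $f_2'$, which is automatic from the $k$-linearity asserted in Lemma~\ref{classlemma}.
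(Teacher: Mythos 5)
Your proof is correct and is essentially identical to the paper's own argument: both define the corrected map as $f_2 - f_1\circ\class\circ f_2$, use that $f_1$ lands in cocycles to preserve the coboundary relation, and use $\class\circ f_1 = \Id$ to kill the class.
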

\begin{proof}
Choose any $\tilde{f_2}$ (satisfying $d\tilde{f}_2(a,b)=f_1(a)f_1(b)-f_1(ab)$). Put $f_2=\tilde{f}_2-f_1\circ \class \circ \tilde{f}_2$. Since $df_1=0$, we get
\[ df_2(a,b)=d\tilde{f}_2(a,b)=f_1(a)f_1(b)-f_1(ab), \]
and from $\class\circ f_1=\Id$ follows that
\begin{equation*} \class\circ f_2=\class\circ \tilde{f}_2-\class\circ f_1\circ \class\circ \tilde{f}_2=0. \qedhere \end{equation*}
\end{proof}

Consider (\ref{kozykelterm}) with this simplified version of $f_2$. By applying $\class$, we get the term
\[ \class(f_2(a,b)f_1(c))+\class(f_2(a,bc))+\class(f_2(ab,c))+\class(f_1(a)f_2(b,c)) \]
This is the cohomology class of \eqref{kozykelterm}. Note that the individual terms $f_2(a,b)f_1(c)$, $f_2(a,bc)\dots$ will not be cocycles in general, but the map $\class$ assigns cohomology classes to them in such a way that the sum will be the class we are looking for.

By our choice of $f_2$ (such that $\class\circ f_2=0$), the first three terms in the sum vanish (note that $\class(f_2(a,b)f_1(c))=\class(f_2(a,b))c$ by Proposition~\ref{classlemma}.(iv)). Thus we are interested in terms of the form
$\class(f_1(a)f_2(b,c))$, where $a,b,c$ run through all elements of a $k$-basis of $\hat{H}^*(G)$.

\subsection{Generating cocycles and homotopies}
Now we start the actual computation of $\gamma$. We begin with the construction of a minimal projective resolution $P$ and some cocycles in the endomorphism dga of $P$. Let us define some elements of the group algebra $kG$ as follows. Put $a=g+1$, $b=h+1$ and $c=hg+1$. Furthermore, we write $N=\sum_{j\in G} j$ for the norm element. 
Here are some formulae we will frequently use:
\begin{align*}
 a^t &= b^2 = c^2 &
 a^{2t} & = b^4 = 0 \\
 ba &= ac = a+b+c & 
 N &= a^{2t-1}b \\
 c &= a+bg &
 gc &= a+b \\
 N &= ca^{2t-2}b = ca^{2t-1} &
 N &= a^{2t-1} + a^{2t-2}b + ca^{2t-2} \\
 ca^{t-1}b &= ca^{t-1} + a^{t-1}b
\end{align*}
Also note that $a^{2t-1}, a^{2t-2}$ and $a^{2t-4}$ lie in the center of $kQ_{4t}$. Now a $4$-periodic complete projective resolution of the trivial $kG$-module $k$ is given as follows (see \cite{CartEile}, Chapter XII \S 7):
\[
\xymatrix@C=15pt{
 \dots & P_0=F\ar[l]_-{N} && P_1=F^2 \ar[ll]_{\smatrix{ a & b }} && P_2=F^2 \ar[ll]_{\smatrix{a^{t-1} & c \\ b & a }} && P_3=F \ar[ll]_-{\smatrix{a \\ c}} && P_4=F \ar[ll]_-{N} & \dots \ar[l] } \]
Since the resolution is minimal, the differential of the complex $\Hom_{kG}(P_*,k)$ vanishes; therefore, we immediately get the well-known additive structure of $\hat{H}^*(G)$:
\begin{align*}
  \hat{H}^{4n}(G) \cong \hat{H}^{4n+3}(G) & \cong k, & \hat{H}^{4n+1}(G) \cong \hat{H}^{4n+2}(G) & \cong k^2. 
\end{align*}
Let us write $\bar{s}:P\rightarrow P[4]$ for the shift map, given by the identity map in every degree. This is an invertible cocycle; thus, multiplication by a suitable power of $s$ yields an isomorphism $\hat{H}^{4n+u}(G)\cong\hat{H}^{u}(G)$ for $u=0,1,2,3$ and $n\in\mathbb{Z}$. Now we are heading for explicit generators $x,y$ of $\hat{H}^1(G)\cong H^1\Hom_{kG}^*(P,P)$, which are represented by chain maps $\bar{x},\bar{y}:P[1]\rightarrow P$. By construction we have $P_1=F^2$ and $P_0=F$. We extend the two projections $P_1\rightarrow P_0$ to chain transformations $P[1]\rightarrow P$ as follows:
For $\bar{x}:P\rightarrow P[1]$ we take
\[
\xymatrix@C=35pt{
\dots & F\ar[l] \ar[d]^{a^{2t-2}b} & F^2 \ar[l]_{\smatrix{ a & b }} \ar[d]^{\smatrix{1 & 0}} & F^2 \ar[l]_{\smatrix{a^{t-1} & c \\ b & a }} \ar[d]^{\smatrix{a^{t-2} & 1 \\ 0 & g}} & F \ar[l]_{\smatrix{a \\ c}} \ar[d]^{\smatrix{1 \\ 1}} & F \ar[l]_{N} \ar[d]^{a^{2t-2}b} & \dots \ar[l] \\
\dots & F \ar[l] & F\ar[l]^{N} & F^2 \ar[l]^{\smatrix{ a & b }} & F^2 \ar[l]^{\smatrix{a^{t-1} & c \\ b & a }} & F \ar[l]^{\smatrix{a \\ c}}  & \dots \ar[l] } \]
and extend this $4$-periodically.
The $4$-periodic chain map $\bar{y}:P\rightarrow P[1]$ is defined as follows:
\[
\xymatrix@C=35pt{
\dots & F\ar[l] \ar[d]^{a^{2t-1}} & F^2 \ar[l]_{\smatrix{ a & b }} \ar[d]^{\smatrix{0 & 1}} & F^2 \ar[l]_{\smatrix{a^{t-1} & c \\ b & a }} \ar[d]^{\smatrix{0 & 1 \\ 1 & 0}} & F \ar[l]_{\smatrix{a \\ c}} \ar[d]^{\smatrix{0 \\ 1}} & F \ar[l]_{N} \ar[d]^{a^{2t-1}} & \dots \ar[l] \\
\dots & F \ar[l] & F\ar[l]^{N} & F^2 \ar[l]^{\smatrix{ a & b }} & F^2 \ar[l]^{\smatrix{a^{t-1} & c \\ b & a }} & F \ar[l]^{\smatrix{a \\ c}}  & \dots \ar[l] } \]
Since these cocycles are $4$-periodic, they commute with $\bar{s}$. Let us determine the pairwise products of these maps. We start with $\bar{x}\bar{y}$:
\[
\xymatrix@C=35pt{
\dots & F\ar[l] \ar[d]^{\smatrix{a^{2t-1} \\ a^{2t-1}}} & F^2 \ar[l]_{\smatrix{ a & b }} \ar[d]^{\smatrix{0 & a^{2t-2}b}} & F^2 \ar[l]_{\smatrix{a^{t-1} & c \\ b & a }} \ar[d]^{\smatrix{0 & 1}} & F \ar[l]_{\smatrix{a \\ c}} \ar[d]^{\smatrix{1 \\ g}} & F \ar[l]_{N} \ar[d]^{\smatrix{a^{2t-1} \\ a^{2t-1}}} & \dots \ar[l] \\
\dots & F^2 \ar[l] & F \ar[l]^{\smatrix{a \\ c}} & F\ar[l]^{N} & F^2 \ar[l]^{\smatrix{ a & b }} & F^2 \ar[l]^{\smatrix{a^{t-1} & c \\ b & a }} & \dots \ar[l] } \]
The product $\bar{y}\bar{x}$ is given as follows:
\[
\xymatrix@C=35pt{
\dots & F\ar[l] \ar[d]^{\smatrix{0 \\ a^{2t-2}b}} & F^2 \ar[l]_{\smatrix{ a & b }} \ar[d]^{\smatrix{a^{2t-1} & 0}} & F^2 \ar[l]_{\smatrix{a^{t-1} & c \\ b & a }} \ar[d]^{\smatrix{0 & g}} & F \ar[l]_{\smatrix{a \\ c}} \ar[d]^{\smatrix{1 \\ 1}} & F \ar[l]_{N} \ar[d]^{\smatrix{0 \\ a^{2t-2}b}} & \dots \ar[l] \\
\dots & F^2 \ar[l] & F \ar[l]^{\smatrix{a \\ c}} & F\ar[l]^{N} & F^2 \ar[l]^{\smatrix{ a & b }} & F^2 \ar[l]^{\smatrix{a^{t-1} & c \\ b & a }} & \dots \ar[l] } \]
Next, we compute $\bar{x}^2$: 
\[
\xymatrix@C=35pt{
\dots & F\ar[l] \ar[d]^{\smatrix{a^{2t-2}b \\ a^{2t-2}b}} & F^2 \ar[l]_{\smatrix{ a & b }} \ar[d]^{\smatrix{a^{2t-2}b & 0}} & F^2 \ar[l]_{\smatrix{a^{t-1} & c \\ b & a }} \ar[d]^{\smatrix{a^{t-2} & 1}} & F \ar[l]_{\smatrix{a \\ c}} \ar[d]^{\smatrix{a^{t-2}+1 \\ g}} & F \ar[l]_{N} \ar[d]^{\smatrix{a^{2t-2}b \\ a^{2t-2}b}} & \dots \ar[l] \\
\dots & F^2 \ar[l] & F \ar[l]^{\smatrix{a \\ c}} & F\ar[l]^{N} & F^2 \ar[l]^{\smatrix{ a & b }} & F^2 \ar[l]^{\smatrix{a^{t-1} & c \\ b & a }} & \dots \ar[l] } \]
And now $\bar{y}^2$:
\[
\xymatrix@C=35pt{
\dots & F\ar[l] \ar[d]^{\smatrix{0 \\ a^{2t-1}}} & F^2 \ar[l]_{\smatrix{ a & b }} \ar[d]^{\smatrix{0 & a^{2t-1}}} & F^2 \ar[l]_{\smatrix{a^{t-1} & c \\ b & a }} \ar[d]^{\smatrix{1 & 0}} & F \ar[l]_{\smatrix{a \\ c}} \ar[d]^{\smatrix{1 \\ 0}} & F \ar[l]_{N} \ar[d]^{\smatrix{0 \\ a^{2t-1}}} & \dots \ar[l] \\
\dots & F^2 \ar[l] & F \ar[l]^{\smatrix{a \\ c}} & F\ar[l]^{N} & F^2 \ar[l]^{\smatrix{ a & b }} & F^2 \ar[l]^{\smatrix{a^{t-1} & c \\ b & a }} & \dots \ar[l] } \]
In each of these cocycles, the map $P_2\rightarrow P_0$ determines the cohomology class by the isomorphism \eqref{tateiso}; in $k^2$, they correspond to $(0\,1), (0\,1), (\epsilon(a^{t-2})\,1)$ and $(1\,0)$, respectively. Hence $\hat{H}^2(G)$ is generated by $x^2$ and $y^2$, and we have $xy=yx$. Furthermore, we also see from this description that
\[ xy = \begin{cases} x^2+y^2 & \text{if $t=2$,} \\ x^2 & \text{otherwise.}  \end{cases} \]
But we will need explicit chain homotopies for all these relations later on, so let us start with the commutator relation $xy=yx$. Let $\bar{p}$ be the $4$-periodic null-homotopy for $\bar{x}\bar{y}+\bar{y}\bar{x}$ defined as follows:
\[
\xymatrix@C=35pt{
\dots & F\ar[l] \ar[dr]^(0.6){a^{2t-2}} & F^2 \ar[l]_{\smatrix{ a & b }} \ar[dr]^(0.6){0} & F^2 \ar[l]_{\smatrix{a^{t-1} & c \\ b & a }} \ar[dr]^(0.7){\smatrix{0 & 1 \\ 0 & 0}} & F \ar[l]_{\smatrix{a \\ c}} \ar[dr]^(0.7){\smatrix{0 \\ 1}} & F \ar[l]_{N} & \dots \ar[l] \\
\dots & F^2 \ar[l] & F \ar[l]^{\smatrix{a \\ c}} & F\ar[l]^{N} & F^2 \ar[l]^{\smatrix{ a & b }} & F^2 \ar[l]^{\smatrix{a^{t-1} & c \\ b & a }} & \dots \ar[l] } \]
Now let us compute $\bar{y}^3$:
\[
\xymatrix@C=35pt{
\dots & F \ar[l] \ar[d]^{0} & F\ar[l]_{N} \ar[d]^{\smatrix{a^{2t-1} \\ 0}} & F^2 \ar[l]_{\smatrix{ a & b }} \ar[d]^{\smatrix{0 & 0 \\ 0 & a^{2t-1}}}  & F^2 \ar[l]_{\smatrix{a^{t-1} & c \\ b & a }} \ar[d]^{\smatrix{a^{2t-1} & 0}}  & F \ar[l]_{\smatrix{a \\ c}} \ar[d]^{0} & \dots \ar[l] \\
\dots & F \ar[l]  & F^2 \ar[l]^{\smatrix{ a & b }} & F^2 \ar[l]^{\smatrix{a^{t-1} & c \\ b & a }} & F \ar[l]^{\smatrix{a \\ c}} & F\ar[l]^{N} & \dots \ar[l] } \]
Then we find a null-homotopy for that map in two steps: First consider the $4$-periodic extension of the map
\[
\xymatrix@C=20pt@R=40pt{
\dots & F \ar[l] \ar[drr]^(0.6){0} && F\ar[ll]_{N} \ar[drr]^(0.7){\smatrix{bh^{-1} \\ a^{t-1}h^{-1}}} && F^2 \ar[ll]_{\smatrix{ a & b }} \ar[drr]^(0.7){\!\!\smatrix{cg^{-1} \\ a^{t-1}}^{\!T}\! h^{-1}}  && F^2 \ar[ll]_{\smatrix{a^{t-1} & c \\ b & a }} \ar[drr]^(0.6){0}  & & F \ar[ll]_{\smatrix{a \\ c}} & \dots \ar[l] \\
\dots & F \ar[l]  && F^2 \ar[ll]^{\smatrix{ a & b }} && F^2 \ar[ll]^{\smatrix{a^{t-1} & c \\ b & a }} && F \ar[ll]^{\smatrix{a \\ c}} && F\ar[ll]^{N} & \dots \ar[l] } \]
and call it $\bar{w}'$. Note that this will not quite be a homotopy for $\bar{y}^3$, because it yields the wrong result in degrees $P_{4n+2}\rightarrow P_{4n-1}$ for all $n\in\mathbb{Z}$. But if we put
\[ P_{8n+j+3}\rightarrow P_{8n+j}: \bar{w}_{8n+j} = \begin{cases} \bar{w}'_{8n+j} & \text{if $j=0,1,2,3$,} \\ (\bar{w}'+\bar{y}^2)_{8n+j} & \text{if $j=4,5,6,7$,} \end{cases} \]
then we get an $8$-periodic null-homotopy for $\bar{y}^3$ which will be called $\bar{w}$ and satisfies $\bar{s}\bar{w} +\bar{w}\bar{s} = \bar{y}^2$.

\subsection{Computation for the quaternion group}
Due to the different multiplicative relation in $\hat{H}^*(G)$ we need to consider the cases $t=2$ and $t\geq 4$ separately. We start with $t=2$. In this case, the map
\[
\xymatrix@C=35pt{
\dots & F^2 \ar[l] \ar[dr]^(0.6){0} & F^2 \ar[l]_{\smatrix{a & c \\ b & a }} \ar[dr]^(0.6){0} & F \ar[l]_{\smatrix{a \\ c}} \ar[dr]^(0.6){0} & F \ar[l]_{N} \ar[dr]^(0.6){a^3+a^2+ab} & F^2 \ar[l]_{\smatrix{ a & b }} & \dots \ar[l] \\
\dots & F \ar[l]  & F\ar[l]^{N} & F^2 \ar[l]^{\smatrix{ a & b }} & F^2 \ar[l]^{\smatrix{a & c \\ b & a }} & F \ar[l]^{\smatrix{a \\ c}} & \dots \ar[l] } 
\]
can be extended (as we did with $\bar{w}$ above) to an $8$-periodic null-homotopy $\bar{r}$ for $\bar{x}^2+\bar{x}\bar{y}+\bar{y}^2$ satisfying $\bar{s}\bar{r} + \bar{r}\bar{s} = \bar{x}+\bar{y}$. Notice that $\bar{x}\bar{y}^2:P_3\rightarrow P_0$ is the identity map, which implies that $xy^2\neq 0\in\hat{H}^3(G)$. Gathering the results we obtained so far, we recover the known fact that 
\begin{align*}
\hat{H}^*(G) \cong k[x,y,s^{\pm 1}]/(x^2+y^2=xy,y^3=0). 
\end{align*}
Let us remark here that all monomials in $x$ and $y$ of degree bigger than $3$ vanish in this ring.

\begin{lemma}\label{pqreigenschaften}
Let $\alpha,\beta,\gamma$ be monomials in the (non-commutative) variables $\bar{x},\bar{y}$, and assume that the degree $|\beta|\geq 3$.
Then we have the following formulae:
\begin{align*}
 \class(\bar{p}\alpha) &= 0 & \class(\bar{r}\alpha) &= 0 &\class(\bar{w}\alpha) &= 0  \\ 
 \class(\bar{x}\bar{p}\alpha) &= xy\class(\alpha) & \class(\gamma\bar{r}\alpha)&=0 & \class(\gamma\bar{w}\alpha)&= 0 \\
 \class(\bar{y}\bar{p}\alpha) &= 0 \\
 \class(\bar{x}^2\bar{p}\alpha) &= x^2y\class(\alpha) \\
 \class(\bar{y}^2\bar{p}\alpha) &= 0 \\
 \class(\beta\bar{p}\alpha) &= 0
\end{align*}
\end{lemma}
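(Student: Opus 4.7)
The plan is to reduce every identity to a $\class$-computation of a short expression via Proposition~\ref{classlemma}. Each $\alpha$ (and each factor $\gamma$ or $\beta$ on the left) is a monomial in $\bar{x},\bar{y}$, hence a cocycle, so part~(iv) of that proposition gives $\class(f\alpha)=\class(f)\,\class(\alpha)$. This strips the trailing $\alpha$ from every formula, reducing the problem to evaluating $\class$ of the expressions $\bar{p}$, $\bar{r}$, $\bar{w}$, $\bar{x}\bar{p}$, $\bar{y}\bar{p}$, $\bar{x}^2\bar{p}$, $\bar{y}^2\bar{p}$, $\beta\bar{p}$ (for $|\beta|\ge 3$), $\gamma\bar{r}$ and $\gamma\bar{w}$.

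For the first three the degree-$0$ components $\bar{p}_0$, $\bar{r}_0$, $\bar{w}_0$ are read off from the explicit diagrams as zero, so the classes vanish. The four formulas for $\class(\bar{x}\bar{p})$, $\class(\bar{y}\bar{p})$, $\class(\bar{x}^2\bar{p})$ and $\class(\bar{y}^2\bar{p})$ come from explicit matrix products; for instance $(\bar{x}\bar{p})_0=\bar{x}_0\circ\bar{p}_1=\smatrix{1 & 0}\smatrix{0 & 1\\0 & 0}=\smatrix{0 & 1}$, which represents $xy$ under the standard identification of $\hat{H}^2(Q_8)$ with $\Hom_{kG}(P_2,k)$. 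The other three reduce to similarly short matrix computations.

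The remaining identities follow from a single dichotomy. When $|\beta|=3$ the map $\bar{p}_3:P_4\to P_3$ is multiplication by $a^{2t-2}=a^2$, which lies in the augmentation ideal $\ker\epsilon$; hence $(\beta\bar{p})_0(1)=\beta_0(1)\cdot a^2$ also lies in $\ker\epsilon$ and represents the zero class. Inspection of the diagrams likewise shows that every entry of $\bar{r}_j$ and $\bar{w}_j$ for $0\le j\le 3$ (among $0$, $a^3+a^2+ab$, $bh^{-1}$, $a^{t-1}h^{-1}$, $cg^{-1}h^{-1}$) lies in $\ker\epsilon$. Since $\ker\epsilon$ is a two-sided ideal, left-composition with $\gamma_0$ preserves this property, so the compositions $\gamma_0\circ\bar{r}_{|\gamma|}$ and $\gamma_0\circ\bar{w}_{|\gamma|}$ vanish after $\epsilon$. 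For $|\beta|>3$ or $|\gamma|>3$ the remark just above Proposition~\ref{pqreigenschaften}---that all monomials in $x,y$ of degree greater than $3$ vanish in $\hat{H}^*(Q_8)$---gives $\class(\beta)=0$ (resp.\ $\class(\gamma)=0$), and Proposition~\ref{classlemma}(iii) applied with $f_2=0$ finishes the argument.

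I expect the main obstacle to be the bookkeeping for the $8$-periodic extensions of $\bar{r}$ and $\bar{w}$. The correction terms added in degrees $j\in\{4,\dots,7\}$ involve $\bar{x}+\bar{y}$ (for $\bar{r}$) or $\bar{y}^2$ (for $\bar{w}$), and these have matrix entries not all in $\ker\epsilon$. The case split above is engineered precisely so that whenever the entries of $\bar{r}_j$ or $\bar{w}_j$ escape the augmentation ideal, the degree $|\gamma|=j$ is already large enough that $\class(\gamma)=0$ closes the argument via~(iii); making certain that no low-degree case slips through this split, and double-checking the claimed augmentation-ideal membership of every matrix entry on the small side, is where most of the care goes.
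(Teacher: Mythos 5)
Your proof is correct and takes essentially the same route as the paper: reduce to $\alpha=1$ via Proposition~\ref{classlemma}(iv), observe that the low-degree components of $\bar{p}$, $\bar{r}$, $\bar{w}$ have matrix entries in the augmentation ideal so that post-composition with $\epsilon$ kills them, and do short explicit matrix computations for the four $\bar{p}$ formulae. If anything you are a bit more careful than the paper's written proof: the paper asserts $\im(\bar{w}_n)\subset\ker(\epsilon)\cdot P_n$ for a cocycle $\bar{a}$ of unrestricted degree $n$, but this literally fails once $n\ge 4$ because of the $\bar{y}^2$ (resp.\ $\bar{x}+\bar{y}$) correction terms in the $8$-periodic extensions; your explicit split into $|\gamma|\le 3$ (augmentation-ideal membership, read directly from the diagrams) and $|\gamma|>3$ (where $\class(\gamma)=0$ and Proposition~\ref{classlemma}(iii) finishes) addresses exactly that point.
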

\begin{proof}
By Proposition~\ref{classlemma}.(iii) we can assume that the degree of $\beta$ is at most $3$. Furthermore, we can assume $\alpha=1$ by Proposition~\ref{classlemma}.(iv). In order to determine $\class(\bar{a}\bar{w})$ for any given cocycle $\bar{a}$ of degree $n$, we consider the composition
\begin{equation*}
  P_{n+2} \xrightarrow{\bar{w}_n} P_n \xrightarrow{\bar{a}_0} P_0 \xrightarrow{\hspace{1pt}\epsilon} k 
\end{equation*}
as an element of $H^{n+2}\Hom_{kG}(P_*,k)$. 
Notice $\im(\bar{w}_n)\subset \ker(\epsilon)\cdot P_n$. Therefore, $\im(\bar{a}_0\circ\bar{w}_n)\subset \ker(\epsilon)\cdot P_0=\ker(\epsilon)$, hence  $\epsilon\circ \bar{a}_0\circ \bar{w}_n=0$. The same proof works for $\bar{r}$ instead of $\bar{w}$, so we are left with $\bar{p}$. 
For $\class(\bar{x}\bar{p})$ consider $\bar{x}\bar{p}$ in degree $0$, i.e.,~
\[
\begin{array}{ccccc}
P_2 &\stackrel{\bar{p}_1}{\longrightarrow}&P_1&\stackrel{\bar{x}_0}{\longrightarrow}& P_0 \\
&\smatrix{0 & 1 \\ 0 & 0}&&\smatrix{1 & 0}
\end{array}
\]
that is $\smatrix{0 & 1}:P_2\longrightarrow P_0$, which corresponds to $xy$. The remaining cases can be shown analogously.
\end{proof}

\begin{bemerkung}\label{keine4periode} Using $\class$, we can prove that there is no $4$-periodic null-homotopy for $\bar{x}^2+\bar{x}\bar{y}+\bar{y}^2$ as follows: Suppose there is a $4$-periodic null-homotopy; call it $\hat{r}$. Since $d(\hat{r}-\bar{r})=0$, $\bar{q}=\hat{r}-\bar{r}$ is a cocycle, representing some class $q$. By construction,  $\bar{s}\bar{r}=(\bar{r}+\bar{x}+\bar{y})\bar{s}$. Since $\hat{r}$ is $4$-periodic, we have
$\class(\bar{s}\bar{q})=\class(\bar{q}\bar{s})-\class((\bar{x}+\bar{y})\bar{s})=qs-(x+y)s$
by Proposition~\ref{classlemma}. On the other hand, $\class(\bar{s}\bar{q})=sq$, hence $(x+y)s=0$, a contradiction. In a similar way one shows that there is no $4$-periodic null-homotopy for\rem{?} $\bar{x}^3$.
\end{bemerkung}

As a next step, we are going to define the functions $f_1$ and $f_2$. A $k$-basis of $\hat{H}^*(G)$ is given by $\mathfrak{C}=\{s^i, xs^i, ys^i, x^2s^i, y^2s^i, x^2ys^i\mid i\in\mathbb{Z}\}$. Define the $k$-linear map $f_1$ on the basis $\mathfrak{C}$ by
\begin{eqnarray*}
f_1\,:\, \hat{H}^*(G) &\rightarrow&\Hom_{kG}^*(P,P) \\
x^\varepsilon y^\delta s^i &\mapsto& \bar{x}^\varepsilon \bar{y}^\delta \bar{s}^i
\end{eqnarray*}
for all  $i,\varepsilon,\delta\in\mathbb{Z}$ for which the expression on the left hand side lies in $\mathfrak{C}$. Let us define the set $\B=\{1,x,y,x^2,y^2,x^2y\}$. For all $b,c\in \mathcal{B}$ and $i,j\in\mathbb{Z}$ we have $f_1(bs^ics^j)=f_1(bc)\bar{s}^{i+j}$ and $f_1(bs^i)f_1(cs^i)=f_1(b)f_1(c)\bar{s}^{i+j}$, since $\bar{s}$ commutes with both $\bar{x}$ and $\bar{y}$. This implies that we can define $f_2$ on $\mathcal{B}\times\mathcal{B}$ and then extend it to $\mathfrak{C}\times\mathfrak{C}$ via
$f_2(bs^i,cs^j)=f_2(b,c)\bar{s}^{i+j}$. Now define $f_2$ on $\mathcal{B}\times\mathcal{B}$ as follows:
\begin{flushleft}
\begin{tabular}{cc|cccc|} 
 \multicolumn{2}{c}{\multirow{2}*{$f_2(b,c)$}}    &     \multicolumn{4}{c}{$c$}  \\
\multicolumn{1}{c}{}     &      &$1$&$x$         &$y$         &$x^2$         \\  \cline{2-6}
\multirow{6}*{$b$}    &$1$   &$0$&$0$         &$0$         &$0$             \\
    &$x$   &$0$& $0$ & $\bar{r}$& $\bar{x}\bar{r}+\bar{r}{y}+\bar{w}$         \\
    &$y$   &$0$& $\bar{p}+\bar{r}$ & $0$ & $\bar{p}\bar{x}+\bar{x}\bar{p}+\bar{x}\bar{y}$ \\
    &$x^2$ &$0$& $\bar{x}\bar{r}+\bar{r}\bar{y}+\bar{w}$ & $0$ & $\bar{x}\bar{r}\bar{x}+\bar{r}\bar{y}\bar{x}+\bar{w}\bar{x}$ \\
    &$y^2$ &$0$& $\bar{y}\bar{p}+\bar{y}\bar{r}+\bar{w}+\bar{p}\bar{x}+\bar{x}\bar{p}+\bar{x}\bar{y}$ & $\bar{w}$ & $\bar{y}^2\bar{r}+\bar{y}^2\bar{p}+\bar{w}\bar{x}+\bar{w}\bar{y}$ \\
    &$x^2y$&$0$& $\bar{x}^2\bar{p}+\bar{x}\bar{r}\bar{y}+\bar{r}\bar{y}^2+\bar{w}\bar{y}+\bar{x}^2\bar{y}$ & $\bar{r}\bar{y}^2+\bar{x}\bar{w}+\bar{y}\bar{w}$ & $\ast$\\  \cline{2-6}
\end{tabular}
\end{flushleft}
\begin{flushright}
\begin{tabular}{cc|cc|} 
 \multicolumn{2}{c}{}    &     \multicolumn{2}{c}{}  \\
 \multicolumn{1}{c}{} &    &$y^2$      &$x^2y$    \\  \cline{2-4}
\multirow{6}*{$b$}    &$1$    &$0$        &$0$       \\
    &$x$   & $\bar{r}\bar{y}+\bar{w}$ & $\bar{x}\bar{r}\bar{y}+\bar{r}\bar{y}^2+\bar{w}\bar{y}$ \\
    &$y$   & $\bar{w}$ & $\bar{y}\bar{r}\bar{y}+\bar{p}\bar{y}^2+\bar{x}\bar{w}+\bar{y}\bar{w}$ \\
    &$x^2$ & $\bar{r}\bar{y}^2+\bar{x}\bar{w}+\bar{y}\bar{w}$ & $\ast$ \\
    &$y^2$ & $\bar{w}\bar{y}$ & $\ast$ \\
    &$x^2y$& $\ast$ & $\ast$ \\  \cline{2-4}
\end{tabular}
\end{flushright}
Direct verification shows that $df_2(b,c)=f_1(bc)-f_1(b)f_1(c)$ for all $b,c$ for which $f_2$ is defined. Each $\ast$ can be replaced by a suitable polynomial expression in $\bar{x},\bar{y},\bar{p},\bar{r},\bar{w}$ such that $df_2(b,c)=f_1(bc)-f_1(b)f_1(c)$ holds for all $b,c$; as will turn out, it does not matter which choice we make here. Our $f_2$ will then already be simplified in the sense of Proposition~\ref{vereinfachlemma}, which is why some apparently unnecessary terms occur (e.g., the $\bar{x}\bar{y}$ in $f_2(y,x^2)$). Indeed, $\class\circ f_2=0$, as one can check using Proposition~\ref{pqreigenschaften}. 

As a final step, we need to investigate the term
\[ m(a,b,c)=\class(f_1(a)f_2(b,c)) \]
for all $a,b,c\in\mathfrak{C}$. Since $f_2(b,c)$ is $8$-periodic, we have
$m(as^{2h},bs^i,cs^j)=m(a,b,c)s^{2h+i+j}$ for all integers $h,i,j$ and $a,b,c\in\mathfrak{C}$. Therefore it is enough to consider all triples $(a,b,c)\in\bigl(\mathcal{B}\cup\mathcal{B} s\bigr)\times\mathcal{B}\times\mathcal{B}$.

Consider the case $a\in\mathcal{B}$. If $a=1$, then
$\class(f_1(a)f_2(b,c))=\class(f_2(b,c))=0$. If $a\in\{y^2,x^2y\}$, then $f_1(a)f_2(b,c)$ is a sum of terms $\beta\bar{p}\alpha$, $\beta\bar{r}\alpha$, $\beta\bar{w}\alpha$ and $\beta\bar{x}\bar{y}\alpha$, where $\alpha$ and $\beta$ are monomials in $\bar{x}$ and $\bar{y}$, and the degree of $\beta$ is at least $2$ and $\beta\neq\bar{x}^2$. Hence $\class(f_1(a)f_2(b,c))=0$ by Proposition~\ref{pqreigenschaften}.

Next, consider $a=x$. By Proposition~\ref{pqreigenschaften} we get $\class(\bar{x}f_2(b,c))$ from $f_2(b,c)$ by the following rule: Put an $\bar{x}$ in front of all monomials in $\bar{x}$ and $\bar{y}$. Then remove all summands containing $\bar{p}$, $\bar{r}$ or $\bar{w}$, except those beginning with $\bar{p}$, $\bar{x}\bar{p}$ or $\bar{y}\bar{p}$, where we replace the $\bar{p}$ by $xy$, and $\bar{x}\bar{p}$ and $\bar{y}\bar{p}$ by $x^2y$. Finally, replace all $\bar{x}$ and $\bar{y}$ by $x$ and $y$, respectively. Using this procedure, we get the following table for $\class(\bar{x}f_2(b,c))$:
\begin{center}
\begin{tabular}{cc|cccccc|} 
\multicolumn{2}{c}{\multirow{2}*{$\class(\bar{x}f_2(b,c))$}}  & \multicolumn{6}{c}{$c$}     \\
\multicolumn{1}{c}{}    &      &$1$&$x$         &$y$         &$x^2$       &$y^2$      &$x^2y$    \\  \cline{2-8}
\multirow{6}*{$b$}    &$1$   &$0$&$0$         &$0$         &$0$         &$0$        &$0$   \\
    &$x$   &$0$&$0$         &$0$         &$0$         &$0$        &$0$       \\
    &$y$   &$0$&$xy$       &$0$         &$xyx+x^2y+x^2y$   &$0$        &$\ast$ \\
    &$x^2$ &$0$&$0$         &$0$         &$\ast$         &$\ast$        &$\ast$       \\
    &$y^2$ &$0$&$x^2y+xyx+x^2y+x^2y$  &$0$     &$\ast$       &$\ast$        &$\ast$    \\
    &$x^2y$&$0$&$\ast$         &$\ast$         &$\ast$         &$\ast$        &$\ast$       \\  \cline{2-8}
\end{tabular}
\end{center}
Here each $\ast$ stands for some homogeneous polynomial in $x,y$ of degree at least $4$. Almost all these expressions vanish, the only remaining terms are
\begin{align*}
 m(x,y,x)   &= xy, \\
 m(x,y,x^2) &= x^2y.
\end{align*}
For the case $a=y$ we use a similar method resulting from Proposition~\ref{pqreigenschaften}, and we end up with $m(y,b,c)=0$ for all $b,c\in\B$. Finally,
for $a=x^2$ we find that the only non-zero term is $m(x^2,y,x)=x^2y$.

The case $a\in\mathcal{B}s$ is slightly more difficult. Consider the map
\[ h(b,c)=\Bar{s}f_2(b,c)\Bar{s}^{-1}-f_2(b,c), \]
measuring how far away $f_2$ is from $4$-periodicity.
From the equations
\begin{eqnarray*}
\bar{s}\bar{p}\bar{s}^{-1}&=&\bar{p} \\
\bar{s}\bar{r}\bar{s}^{-1}&=&\bar{r}+\bar{x}+\bar{y} \\
\bar{s}\bar{w}\bar{s}^{-1}&=&\bar{w}+\bar{y}^2  
\end{eqnarray*}
we get the following table for $h$:
\begin{flushleft}
\begin{tabular}{cc|ccc|} 
\multicolumn{2}{c}{\multirow{2}*{$h(b,c)$}}    & \multicolumn{3}{c}{$c$}   \\
\multicolumn{1}{c}{} &  &$1$&$x$         &$y$                 \\  \cline{2-5}
\multirow{6}*{$b$}    &$1$   &$0$&$0$         &$0$                     \\
    &$x$   &$0$& $0$ & $\bar{x}+\bar{y}$         \\
    &$y$   &$0$& $\bar{x}+\bar{y}$ & $0$  \\
    &$x^2$ &$0$& $\bar{x}(\bar{x}+\bar{y})+(\bar{x}+\bar{y})\bar{y}+\bar{y}^2$ & $0$ \\
    &$y^2$ &$0$& $\bar{y}(\bar{x}+\bar{y})+\bar{y}^2$ & $\bar{y}^2$  \\
    &$x^2y$&$0$& $\bar{x}(\bar{x}+\bar{y})\bar{y}+(\bar{x}+\bar{y})\bar{y}^2+\bar{y}^2\bar{y}$ & $(\bar{x}+\bar{y})\bar{y}^2+\bar{x}\bar{y}^2+\bar{y}\bar{y}^2$ \\  \cline{2-5}
\end{tabular}
\end{flushleft}
\begin{flushright}
\begin{tabular}{cc|ccc|}
\multicolumn{2}{c}{\multirow{2}*{}}    & \multicolumn{3}{c}{}   \\
\multicolumn{1}{c}{}  &     & $x^2$ &$y^2$      &$x^2y$    \\  \cline{2-5}
\multirow{6}*{$b$}    &$1$  & $0$    &$0$        &$0$       \\
    &$x$   & $\bar{x}(\bar{x}+\bar{y})+(\bar{x}+\bar{y})\bar{y}+\bar{y}^2$ & $(\bar{x}+\bar{y})\bar{y}+\bar{y}^2$ & $\bar{x}(\bar{x}+\bar{y})\bar{y}+(\bar{x}+\bar{y})\bar{y}^2+\bar{y}^2\bar{y}$ \\
    &$y$   & $0$ & $\bar{y}^2$ & $\bar{y}(\bar{x}+\bar{y})\bar{y}+\bar{x}\bar{y}^2+\bar{y}\bar{y}^2$ \\
    &$x^2$ & $\bar{x}(\bar{x}+\bar{y})\bar{x}+(\bar{x}+\bar{y})\bar{y}\bar{x}+\bar{y}^2\bar{x}$ & $(\bar{x}+\bar{y})\bar{y}^2+\bar{x}\bar{y}^2+\bar{y}\bar{y}^2$ & $\ast$ \\
    &$y^2$ &  $\bar{y}^2(\bar{x}+\bar{y})+\bar{y}^2\bar{x}+\bar{y}^2\bar{y}$ & $\bar{y}^2\bar{y}$ & $\ast$ \\
    &$x^2y$&$\ast$ & $\ast$ & $\ast$ \\  \cline{2-5}
\end{tabular}
\end{flushright}
where $\ast$ denotes certain homogeneous polynomials in $\bar{x}$ and $\bar{y}$ of degree at least $4$. Applying $\class$ to this table and using relations in $\hat{H}^*(G)$, we get
\begin{center}
\begin{tabular}{cc|cccccc|} 
\multicolumn{2}{c}{\multirow{2}*{$\class(h(b,c))$}}  &  \multicolumn{6}{c}{$c$} \\
\multicolumn{1}{c}{}   &      &$1$&$x$         &$y$         &$x^2$  &$y^2$      &$x^2y$        \\ \cline{2-8}
\multirow{6}*{$b$}    &$1$   &$0$&$0$         &$0$         &$0$  & $0$       &$0$          \\
    &$x$   &$0$&$0$  &$x+y$ & $x^2$ & $x^2+y^2$&$x^2y$ \\
    &$y$   &$0$&$x+y$&$0$         &$0$  & $y^2$       &$0$   \\
    &$x^2$ &$0$&$x^2$       &$0$         &$0$  &$0$   &$0$      \\
    &$y^2$ &$0$&$x^2+y^2$ &$y^2$       &$0$  &$0$   &$0$      \\
    &$x^2y$&$0$&$x^2y$     &$0$    &$0$   &$0$   &$0$     \\  \cline{2-8}
\end{tabular}
\end{center}

By definition of $h$ we have $h(b,c)\bar{s}=\bar{s}f_2(b,c)-f_2(b,c)\bar{s}$, hence
\[ \class(h(b,c))s=\class(\Bar{s}f_2(b,c))-\underbrace{\class(f_2(b,c))}_{0} s =m(s,b,c). \]
Therefore, this table shows the values $m(s,b,c)$ with $b,c\in\mathcal{B}$. On the other hand, we know that $m$ is a Hochschild-cocycle, in particular for all $a,b,c\in\mathcal{B}$
\[ a\, m(s,b,c)+m(as,b,c)+m(a,sb,c)+m(a,s,bc)+m(a,s,b)c =0. \]
Using $m(a,s,b)c=m(a,1,b)sc=0$, $m(a,s,bc)=m(a,1,bc)s=0$ and $m(a,sb,c)=m(a,b,c)s$, we get
\begin{equation}\label{mgleichung} m(as,b,c)=a\, m(s,b,c)+m(a,b,c)s \end{equation}
We know the right hand side for all $a,b,c\in\mathcal{B}$. Gathering all results, we get the following theorem.
\begin{satz}\label{mquat8}
The canonical element $\gamma_G$ is represented by the Hochschild cocycle $m$ which is given by the formula
\begin{align*}
 m(x,y,x)   &= xy, \\
 m(x,y,x^2) &= x^2y, \\
 m(x^2,y,x) &= x^2y, \\
 m(a,b,c) &= 0 &&\text{for all other $a,b,c\in \B$,} \\
 m(sa,b,c) &= sm(a,b,c) + a \class(h(b,c)) &&\text{where $\class(h(b,c))$ is given by the table above,} \\
 m(s^{2i}a,s^jb,s^lc) &= s^{2i+j+l} m(a,b,c).
\end{align*}
The element $\gamma\in \hochschild^{3,-1} \hat{H}^*(G)$ represented by $m$ is non-trivial.
\end{satz}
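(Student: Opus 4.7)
The explicit values of $m$ are essentially an assembly of the computations already performed in this subsection. For $a,b,c\in\B$ one has $m(a,b,c)=\class(f_1(a)f_2(b,c))$, which is read off from the tables of $\class(\bar x f_2(b,c))$, $\class(\bar y f_2(b,c))$ and $\class(\bar x^2 f_2(b,c))$ displayed above (after reducing modulo the relations in $\hat H^*(G)$), together with the vanishing of $m(a,b,c)$ for $a\in\{1,y^2,x^2y\}$ established in the paragraph preceding the statement. The recursion $m(sa,b,c)=s\,m(a,b,c)+a\,\class(h(b,c))$ is precisely equation~\eqref{mgleichung}, deduced from the Hochschild cocycle identity for $m$ applied to the quadruple $(a,s,b,c)$ after using the normalization facts $m(a,s,b)c=0$ and $m(a,s,bc)=0$. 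The periodicity $m(s^{2i}a,s^jb,s^lc)=s^{2i+j+l}m(a,b,c)$ reflects the $8$-periodicity of $f_2$ together with the commutation of $\bar s$ with $\bar x,\bar y$ in the endomorphism dga.

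\textbf{Strategy for non-triviality.} I would argue by contradiction: suppose $m=\delta n$ for some Hochschild $2$-cochain $n$ of bidegree $(2,-1)$, and aim to derive an impossible identity in $\hat H^*(G)$. Since $n(u,v)$ is homogeneous of degree $|u|+|v|-1$, and since the small-degree pieces of $\hat H^*(G)$ are spanned over $k$ by $1$; $\{x,y\}$; $\{x^2,y^2\}$; $\{x^2y\}$ in degrees $0,1,2,3$ respectively, the relevant values of $n$ on pairs of elements of $\B$ involve only finitely many unknown scalars. Writing out the relation $\delta n(a,b,c)=m(a,b,c)$ for the non-zero triples $(x,y,x)$, $(x,y,x^2)$, $(x^2,y,x)$ and for a collection of vanishing triples such as $(x,x,y)$, $(y,x,x)$, $(y,x,y)$, $(x,x,x)$ yields a finite linear system in these scalars.

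\textbf{Main obstacle.} The hard part will be that most individual cocycle equations are tautologies in characteristic $2$, so a clean contradiction must be extracted from a careful combination. My plan is to exploit the relation $x^2=xy+y^2$, which via linearity forces $n(x^2,v)=n(xy,v)+n(y^2,v)$, together with graded commutativity $xy=yx$, to eliminate all the $n$-unknowns from a suitable combination of the equations above. The expected outcome is an identity of the form $xy=0$ or $x^2y=0$ in $\hat H^*(G)$, which is false and yields the contradiction. Morally, such a combination must exist because the value $m(x,y,x)=xy$ is a genuinely ternary phenomenon: it cannot be cancelled by the binary quantities $x\,n(y,x)$, $n(xy,x)$, $n(x,yx)$, $n(x,y)\,x$ subject to the symmetry constraints imposed by the other (vanishing) triples.
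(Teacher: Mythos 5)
Your account of where the formulas for $m$ come from matches the paper: they are read off from the preceding tables (for $a\in\{x,y,x^2\}$), the vanishing for $a\in\{1,y^2,x^2y\}$ established just before, the identity \eqref{mgleichung}, and the $8$-periodicity of $f_2$. No issues there.

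For the non-triviality, however, your proposal is a plan rather than a proof, and the decisive step is missing. You correctly anticipate a proof by contradiction via the cocycle equations and the relation $x^2+y^2=xy$, but you neither exhibit the combination that yields a contradiction nor explain why one must exist; the sentence ``Morally, such a combination must exist because\dots'' is an expression of hope, not an argument. Your expected outcome, ``an identity of the form $xy=0$ or $x^2y=0$,'' is also not what the argument produces. The paper's actual mechanism is the following: assume $m=\delta g$, add up the five cocycle equations for the triples $(y,x,y)$, $(x,y,y)$, $(y,y,x)$, $(x,x,x)$, $(x,y,x)$, and observe that after using $xy=yx$, $k$-linearity of $g$, and the relation $x^2+y^2+xy=0$, all terms cancel except for $x\bigl(g(x,y)+g(y,x)\bigr)=xy$, which forces $g(x,y)+g(y,x)=y$. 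Then repeat with the $x\leftrightarrow y$-swapped quintuple of triples; since $m(x,y,x)=xy$ is again the only nonzero contribution, one gets $y\bigl(g(x,y)+g(y,x)\bigr)=xy$, hence $g(x,y)+g(y,x)=x$, a contradiction because $x\neq y$. The key ideas absent from your proposal are (a) the specific quintuple of triples whose sum telescopes so cleanly, and (b) the use of the $x\leftrightarrow y$ symmetry of the ring relation to overdetermine $g(x,y)+g(y,x)$. Without these, the proposal cannot be considered a proof of the non-triviality claim.
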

\begin{proof}
It remains to prove the non-triviality of $\gamma$. Assume $m=\delta g$ for some Hochschild $(2,-1)$-cochain $g$. Then,
\[ m(a,b,c)=(\delta g)(a,b,c)=a\, g(b,c)+g(ab,c)+g(a,bc)+g(a,b)c \]
for all $a,b,c$. In particular,
\begin{align*}
0=m(y,x,y)&=yg(x,y)+g(yx,y)+g(y,xy)+g(y,x)y  \\
0=m(x,y,y)&=xg(y,y)+g(xy,y)+g(x,y^2)+g(x,y)y \\
0=m(y,y,x)&=yg(y,x)+g(y^2,x)+g(y,yx)+g(y,y)x \\
0=m(x,x,x)&=xg(x,x)+g(x^2,x)+g(x,x^2)+g(x,x)x \\
xy=m(x,y,x)&=xg(y,x)+g(xy,x)+g(x,yx)+g(x,y)x
\end{align*}
Adding up these equations we get (using $x^2+y^2=xy$)
\[ xy=x\cdot (g(x,y)+g(y,x)). \]
This implies $g(x,y)+g(y,x)=y$. On the other hand, interchanging the roles of $x$ and $y$ we get $g(x,y)+g(y,x)=x$, a contradiction.
\end{proof}

\subsection{Computation for the generalized quaternion group}
From now on, we assume that $t\geq 4$. Then there is an $8$-periodic null-homotopy $\bar{v}$ for $\bar{x}^2+\bar{x}\bar{y}$, partially given by
\[
\xymatrix@C=35pt{
\dots & F^2 \ar[l] \ar[dr]^(0.6){0} & F^2 \ar[l]_{\smatrix{a^{t-1} & c \\ b & a }} \ar[dr]^(0.7){\smatrix{a^{t-3} & 0 \\ 0 & 0}} & F \ar[l]_{\smatrix{a \\ c}} \ar[dr]^(0.6){0} & F \ar[l]_{N} \ar[dr]^(0.6){u} & F^2 \ar[l]_{\smatrix{ a & b }} & \dots \ar[l] \\
\dots & F \ar[l]  & F\ar[l]^{N} & F^2 \ar[l]^{\smatrix{ a & b }} & F^2 \ar[l]^{\smatrix{a^{t-1} & c \\ b & a }} & F \ar[l]^{\smatrix{a \\ c}} & \dots \ar[l] } \]
satisfying $\bar{s}\bar{v} +\bar{v}\bar{s} = \bar{x}$. Here we write $u=ca^{2t-2}+ba^{2t-3}$ and need to prove
\begin{align*}
au &= a^{2t-2}b + a^{2t-1}, &
cu &= a^{2t-2}b + a^{2t-1}, \\
ua &= a^{2t-2}b + N, &
ub &= a^{2t-2}b.
\end{align*}
For instance, to prove the first formula, note that
\[ au + aca^{2t-2} = aba^{2t-3} = a^{2t-3}ba = a^{2t-3}ac = ca^{2t-2} = (a+b+ac)a^{2t-2}. \]
%\[ au = aca^{2t-2} + aba^{2t-3} = aca^{2t-2}+caa^{2t-3} = (ac+c)a^{2t-2} = (a+b)a^{2t-2} = a^{2t-2}(a+b). \]
The other formulae can be proved similarly. 

Again one verifies that $x^2y\neq 0$, so that we recover the well-known structure of $\hat{H}^*(G)$ to be
\[ \hat{H}^*(G) \cong k[x,y,s^{\pm 1}] / (y^3, x^2+xy). \]
Using the variable $z=x+y$, we obtain the isomorphism 
\[ \hat{H}^*(G) \cong k[x,z,s^{\pm 1}] / (xz,x^3+z^3). \]
In the following, we will frequently switch between these two descriptions.

\begin{lemma}
We have the following formulae.
\begin{align*}
 \class(\bar{p}\alpha) &= 0 & \class(\bar{v}\alpha) &= 0 &\class(\bar{w}\alpha) &= 0  \\ 
 \class(\bar{x}\bar{p}\alpha) &= x^2\class(\alpha) & \class(\gamma\bar{v}\alpha)&=0 & \class(\gamma\bar{w}\alpha)&= 0 \\
 \class(\bar{y}\bar{p}\alpha) &= 0 \\
 \class(\bar{x}^2\bar{p}\alpha) &= x^2y\class(\alpha) \\
 \class(\bar{y}^2\bar{p}\alpha) &= 0 \\
 \class(\beta\bar{p}\alpha) &= 0
\end{align*}
for any $\alpha,\beta,\gamma$ monomials in $\bar{x},\bar{y}$ with $|\beta|\geq 3$.
\end{lemma}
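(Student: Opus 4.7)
The plan is to mimic the proof of the analogous Proposition~\ref{pqreigenschaften}, with the null-homotopy $\bar{v}$ now playing the role previously played by $\bar{r}$. First I would invoke Proposition~\ref{classlemma}.(iv) to reduce to $\alpha=1$: each monomial $\alpha$ in $\bar{x},\bar{y}$ is a product of cocycles, hence itself a cocycle, so $\class(\cdots\alpha)=\class(\cdots)\,\class(\alpha)$. Then I would use Proposition~\ref{classlemma}.(iii), combined with the relations $y^3=0$ and $x^2=xy$ in $\hat{H}^*(G)$, to observe that every monomial $\beta$ in $\bar{x},\bar{y}$ of degree at least $4$ represents the zero cohomology class; comparing $\beta$ with the zero map via (iii) then yields $\class(\beta\bar{p})=0$, so we may assume $|\beta|\leq 3$.

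For the identities involving $\bar{v}$ and $\bar{w}$, the argument is identical to the one in the $t=2$ case: every coefficient appearing as a matrix entry of $\bar{v}$ or $\bar{w}$---namely $0$, $a^{t-3}$, $a^{2t-1}$, or $u=ca^{2t-2}+ba^{2t-3}$---lies in the augmentation ideal $\ker(\epsilon)$, since $\epsilon(a)=\epsilon(b)=\epsilon(c)=0$ and $t-3\geq 1$. Therefore $\im(\bar{v}_n)$ and $\im(\bar{w}_n)$ are contained in $\ker(\epsilon)\cdot P_n$, so for any cocycle $\gamma$ of degree $n$ the composition $\epsilon\circ\gamma_0\circ\bar{v}_n$ (and similarly for $\bar{w}$) factors through $\ker(\epsilon)\cdot P_0=\ker(\epsilon)$ and hence vanishes.

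The remaining formulas concern $\bar{p}$. From the diagram I read off $\bar{p}_0=0$, $\bar{p}_1=\smatrix{0 & 1 \\ 0 & 0}$, $\bar{p}_2=\smatrix{0 \\ 1}$, and, by $4$-periodicity, $\bar{p}_3=\bar{p}_{-1}$ is multiplication by $a^{2t-2}$. Direct matrix composition yields each of the listed formulas: for instance, $(\bar{x}\bar{p})_0=\smatrix{1 & 0}\smatrix{0 & 1 \\ 0 & 0}=\smatrix{0 & 1}$ represents $x^2=xy$ in $\hat{H}^2(G)$ (here one uses that for $t\geq 4$ we have $\epsilon(a^{t-2})=0$, so $\bar{x}^2$ also represents $(0,1)$, legitimising the identification $\smatrix{0 & 1}\leftrightarrow x^2$), while $(\bar{x}^2\bar{p})_0=\smatrix{a^{t-2} & 1}\smatrix{0 \\ 1}=1:P_3\to P_0$ represents the generator $x^2y$ of $\hat{H}^3(G)$; analogous computations give $\class(\bar{y}\bar{p})=\class(\bar{y}^2\bar{p})=0$. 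For $|\beta|=3$, the map $\beta_0:F\to F$ is multiplication by some $\lambda\in kG$, so $\beta_0\circ\bar{p}_3$ is multiplication by $\lambda\cdot a^{2t-2}$, whose augmentation equals $\epsilon(\lambda)\cdot\epsilon(a^{2t-2})=0$; hence $\class(\beta\bar{p})=0$.

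The argument is really just bookkeeping; the only place where care is required is verifying that the matrix entries of $\bar{v}$ (notably the element $u=ca^{2t-2}+ba^{2t-3}$, which was not present in the $Q_8$ computation) genuinely lie in $\ker(\epsilon)$, and that the passage from the $Q_8$ relation $xy=x^2+y^2$ to the present relation $xy=x^2$ is absorbed correctly in the identification of matrices with cohomology classes.
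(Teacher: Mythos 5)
Your proof is correct and follows the same approach that the paper uses for the analogous $t=2$ statement (Proposition~\ref{pqreigenschaften}), which the paper points to when it omits this proof: reduce via Proposition~\ref{classlemma}.(iii)--(iv), kill the $\bar{v}$ and $\bar{w}$ terms via the augmentation-ideal observation, and handle the $\bar{p}$ terms by direct matrix composition. Two harmless slips: $a^{2t-1}$ is not actually a matrix entry of $\bar{v}$ or $\bar{w}$ (the nonzero entries of $\bar{w}$ are $bh^{-1}$, $a^{t-1}h^{-1}$, $cg^{-1}h^{-1}$, all still in $\ker\epsilon$), and the unqualified claim $\im(\bar{v}_n)\subset\ker(\epsilon)\cdot P_n$ fails for $n\equiv 4,\dots,7\pmod 8$ where the $8$-periodic correction term $\bar{x}$ contributes unit entries --- but those $n$ never occur once $\gamma$ has been reduced to degree at most $3$ by Proposition~\ref{classlemma}.(iii), so the argument stands.
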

We omit the straightforward proof and turn to the definition of the maps $f_1$ and $f_2$. As before let $\B=\{1,x,y,x^2,y^2,x^2y\}$; we define $f_1$ as
\begin{align*}
 f_1(s^i x^a y^b) &= \bar{s}^i \bar{x}^a\bar{y}^b
\end{align*}
for all $a,b,i\in\mathbb{Z}$ for which $x^ay^b$ lies in $\B$.
Now we define $f_2$ on $\B\times \B$ as follows:
\begin{flushleft}
\begin{tabular}{cc|cccc|} 
 \multicolumn{2}{c}{\multirow{2}*{$f_2(b,c)$}}    &     \multicolumn{4}{c}{$c$}  \\
\multicolumn{1}{c}{}     &      &$1$&$x$         &$y$         &$x^2$         \\  \cline{2-6}
\multirow{6}*{$b$}    &$1$   & $0$ & $0$         & $0$         & $0$             \\
    &$x$   &$0$&$0$         &$\bar{v}$&$\bar{x}\bar{v}$       \\
    &$y$   &$0$&$\bar{p}+\bar{v}$&$0$         &$\bar{p}\bar{x}+\bar{x}\bar{p}+\bar{x}^2$ \\
    &$x^2$ &$0$&$\bar{x}\bar{v}$ & $0$ & $\bar{x}^2\bar{v}+\bar{x}\bar{v}\bar{y}+\bar{v}\bar{y}^2+\bar{x}\bar{w}$     \\   
    &$y^2$ &$0$& $\bar{y}\bar{p}+\bar{p}\bar{y}+\bar{v}\bar{y}$ & $\bar{w}$ &  $\bar{y}^2\bar{v}+\bar{y}^2\bar{p}+\bar{w}\bar{x}$   \\
    &$x^2y$ & $0$ & $\bar{x}^2\bar{p}+\bar{x}\bar{v}\bar{y}+\bar{v}\bar{y}^2+\bar{x}\bar{w}+\bar{x}^2\bar{y}$ & $\bar{v}\bar{y}^2+\bar{x}\bar{w}$    & $\bar{x}^2\bar{p}\bar{x}+\bar{x}\bar{v}\bar{y}\bar{x}+\bar{v}\bar{y}^2\bar{x}+\bar{x}\bar{w}\bar{x}$ \\  \cline{2-6}
\end{tabular}
\end{flushleft}
\begin{flushright}
\begin{tabular}{cc|cc|} 
 \multicolumn{2}{c}{}    &     \multicolumn{2}{c}{}  \\
 \multicolumn{1}{c}{} &    &$y^2$      &$x^2y$    \\  \cline{2-4}
\multirow{6}*{$b$}    &$1$    &$0$        &$0$       \\
    &$x$   & $\bar{v}\bar{y}$ & $\bar{x}\bar{v}\bar{y}+\bar{v}\bar{y}^2+\bar{x}\bar{w}$    \\
    &$y$   & $\bar{w}$ & $\bar{y}\bar{v}\bar{y}+\bar{p}\bar{y}^2+\bar{x}\bar{w}$   \\
    &$x^2$ &$\bar{v}\bar{y}^2+\bar{x}\bar{w}$ & $\bar{x}^2\bar{v}\bar{y}+\bar{x}\bar{v}\bar{y}^2+\bar{x}^2\bar{w}$    \\
    &$y^2$ & $\bar{w}\bar{y}$ & $\bar{y}^2\bar{v}\bar{y}+\bar{y}^2\bar{p}\bar{y}+\bar{w}\bar{x}\bar{y}$ \\
   &$x^2y$ & $\bar{x}^2\bar{w}$ & $\bar{x}^2\bar{y}\bar{v}\bar{y}+\bar{x}^2\bar{p}\bar{y}^2+\bar{x}^3\bar{w}$ \\  \cline{2-4}
\end{tabular}
\end{flushright}
Also put $f_2(s^ia,s^jb) = f_2(a,b)\bar{s}^{i+j}$ for all $i,j\in\mathbb{Z}$ and $a,b\in\B$. This function is chosen in such a way that $\class(f_2(a,b))=0$ for all $a,b\in\B$. One verifies that
\begin{align*}
m(x,y,x) &= x^2, \\
m(x^2,y,x)&=x^2y, \\
m(x,y,x^2) &= x^2y
\end{align*}
and $m$ vanishes on all other triples $(a,b,c)\in\B^{\times 3}$. Let us define $m'$ as follows:
\begin{align} \label{definitionofmprime}
 m'(s^ia,s^jb,s^kc) &= s^{i+j+k} m(a,b,c) &\text{for all $a,b,c\in \B$},
\end{align}
and define $h(a,b) = \bar{s}f_2(a,b)\bar{s}^{-1}-f_2(a,b)$. Then $\class(h(b,c))$ is given by the following table:
\begin{center}
\begin{tabular}{cc|cccccc|} 
 \multicolumn{2}{c}{\multirow{2}*{$\class(h(b,c))$}}    &     \multicolumn{4}{c}{$c$}  \\
\multicolumn{1}{c}{}     &      &$1$&$x$         &$y$         &$x^2$         &$y^2$      &$x^2y$ \\  \cline{2-8}
\multirow{6}*{$b$}    &$1$   & $0$ & $0$         & $0$         & $0$               &$0$        &$0$   \\
    &$x$   &$0$&$0$         &$x$&$x^2$  & $x^2$ & $x^2y$     \\
    &$y$   &$0$&$x$&$0$         & $0$ & $y^2$ & $0$\\
    &$x^2$ &$0$&$x^2$ & $0$ & $0$   & $0$ & $0$    \\   
    &$y^2$ &$0$& $x^2$ & $y^2$ &  $0$  & $0$ & $0$   \\
    &$x^2y$ & $0$ & $x^2y$ & $0$    & $0$ & $0$ & $0$  \\  \cline{2-8}
\end{tabular}
\end{center}
So we get the following explicit description of $m$:
\begin{satz}\label{mquat16}
The canonical element $\gamma_G$ is represented by the Hochschild cocycle $m$ which is given by the formula
\begin{align*}
  m(x,y,x) &= x^2, \\
  m(x^2,y,x) &= x^2y, \\
  m(x,y,x^2) &= x^2y, \\
  m(a,b,c) &= 0 &&\text{for all other $a,b,c\in \B$,} \\
  m(sa,b,c) &= sm(a,b,c) + s a \class(h(b,c)) &&\text{where $\class(h(b,c))$ is given by the table above,} \\
  m(s^{2i}a,s^jb,s^lc) &= s^{2i+j+l} m(a,b,c).
\end{align*}
The element $\gamma\in \hochschild^{3,-1} \hat{H}^*(G)$ represented by $m$ is non-trivial.
\end{satz}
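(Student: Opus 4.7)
The theorem has two parts: the explicit form of $m$, and the non-triviality of the class $\gamma$ it represents. For the first part, the strategy mirrors the $Q_8$ computation. Since $f_2$ has been chosen so that $\class \circ f_2 = 0$ (Proposition~\ref{vereinfachlemma}), the simplified recipe $m(a,b,c) = \class(f_1(a) f_2(b,c))$ applies, and I would read off the three nonzero values on $\B^3$ by running this through the tabulated $f_2$ together with the preceding lemma describing $\class(\bar p \alpha)$, $\class(\bar v \alpha)$, $\class(\bar w \alpha)$ for monomials $\alpha$ in $\bar x, \bar y$. The twist formula $m(sa,b,c) = s\,m(a,b,c) + sa\,\class(h(b,c))$ arises from the commutation rules $\bar s\bar p\bar s^{-1}=\bar p$, $\bar s\bar v\bar s^{-1}=\bar v+\bar x$, $\bar s\bar w\bar s^{-1}=\bar w+\bar y^2$ combined with the table for $\class \circ h$; the $s^{2i}$-linearity is automatic because the correction vanishes when $\bar s$ is applied twice.

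For the non-triviality, assume for contradiction that $m = \delta g$ for some Hochschild $2$-cochain $g$ of bidegree $(2,-1)$, and set $\xi := g(x,y)+g(y,x) \in \hat H^1(G)$. The coboundary relation on $(x,y,x)$ combined with $(x,x,x)$ gives $x\xi = xy$ (the pair $g(x,xy)$ and $g(xy,x)$ cancels in characteristic~$2$), and on $(y,x,y)$ gives $g(y,xy) + g(xy,y) = y\xi$. Summing the relations on $(x,x,y)$ and $(y,x,x)$ and substituting these two identities eliminates every remaining $g$-value and leaves $(x+y)\xi = 0$. Since $(x+y)x = 0$ but $(x+y)y = xy + y^2 \neq 0$ in $\hat H^*(G) = k[x,y,s^{\pm 1}]/(y^3,\,x^2+xy)$, these constraints force $\xi = x$.

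Next, graded commutativity in characteristic $2$ lets me eliminate further parameters. Summing the relations on $(x,y,y)$ and $(y,y,x)$ kills the $g(y,y)$-terms and, together with the identity $g(y,xy)+g(xy,y) = y\xi$ just derived, yields $g(x,y^2) = g(y^2,x)$. Summing $(x,y^2,y)$ and $(y,y^2,x)$ and invoking this equality then forces $g(x^3,y) = g(y,x^3)$. Applying the coboundary to $(y,x^2,y)$, the four summands collapse via these identities to
\[
0 = m(y,x^2,y) = \delta g(y,x^2,y) = (g(y,xy) + g(xy,y))\,y = y\xi \cdot y = xy^2 = x^3,
\]
which contradicts $x^3 \neq 0$ in $\hat H^3(G)$.

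The main obstacle is that, in contrast to $Q_8$, the algebra $\hat H^*(Q_{4t})$ with $t \geq 4$ is not symmetric under $x \leftrightarrow y$: the relation $x^2 = xy$ singles out $x$, so the symmetry argument that closes the $Q_8$ proof is unavailable. The essential content of the proof is finding a sequence of low-degree cocycle conditions that both pins down $\xi$ uniquely and then, through the specific triple $(y,x^2,y)$, forces $y\xi y$ to the generator of the one-dimensional summand $\hat H^3(G)$, collapsing the assumption that $m$ is a coboundary.
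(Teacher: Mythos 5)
Your treatment of the explicit formula for $m$ mirrors the paper's approach and is fine as a sketch. For the non-triviality, your argument is correct in essence but takes a genuinely different route from the paper. The paper switches to the variable $z = x+y$, giving the presentation $\hat{H}^*(G) \cong k[x,z,s^{\pm 1}]/(xz, x^3+z^3)$ in which several products vanish outright; it then picks a clever list of five coboundary equations (on $(x,z,x^2)$, $(x^2,x,z)$, $(z,x^2,x)$, $(z,z^2,z)$, and $z\cdot(z,z,z)$) whose sum collapses directly to $x^3 = 0$. Your route stays in the $x,y$ basis, first pins down the single unknown $\xi = g(x,y)+g(y,x) = x$ via two linear constraints, and then extracts the contradiction from the single triple $(y,x^2,y)$. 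The paper's approach is shorter to state but opaque; yours is longer but conceptually clearer, since it isolates what the coboundary hypothesis actually determines ($\xi$) before deploying it.

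One small gap: when you sum the relations on $(x,y^2,y)$ and $(y,y^2,x)$ to conclude $g(x^3,y) = g(y,x^3)$, the leftover term is $x\bigl(g(y^2,y)+g(y,y^2)\bigr)$, not zero a priori. You need to invoke the coboundary relation on $(y,y,y)$, which gives $g(y^2,y)+g(y,y^2) = yg(y,y)+g(y,y)y = 0$ by commutativity, to dispose of it. Similarly, your wording ``the pair $g(x,xy)$ and $g(xy,x)$ cancels in characteristic $2$'' is slightly misleading --- the cancellation comes from the $(x,x,x)$ relation (and $xy = x^2$), not from the characteristic alone. With these two points tightened, the argument is complete.
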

\begin{proof}
It remains to prove the non-triviality of $\gamma$. Suppose that $m$ is a Hochschild coboundary; then $m=\delta g$ for some $g:\Lambda^{\otimes 2}\rightarrow\Lambda[-1]$. Adding up the equations
\begin{align*}
 x^3 = m(x,z,x^2) &= xg(z,x^2)+g(x,z)x^2 \\
 0  = m(x^2,x,z) &= x^2g(x,z) + g(x^3,z) + g(x^2,x)z \\
 0  = m(z,x^2,x) &= zg(x^2,x) + g(z,x^3) + g(z,x^2)x \\
 0  = m(z,z^2,z) &= zg(z^2,z) + g(z^3,z) + g(z,z^3) + g(z,z^2)z \\
 0 = zm(z,z,z)   &= z^2g(z,z) + zg(z^2,z) + zg(z,z^2) + zg(z,z)z
\end{align*}
and simplifying, we get the contradiction $x^3=0$.
\end{proof}

\section{Realizability of modules}
\subsection{Massey products}
There is a strong connection between the canonical class $\gamma$ and triple Massey products over $\hat{H}^*(G)$. This has already been noted in \cite{bks}, Lemma 5.14, and we will generalize this fact to Massey products of matrices (as introduced by May, \cite{May}). We start with some notation. Let $\Lambda$ be a graded $k$-algebra, and suppose that $I$ is a graded set, that is, a set together with a function $|\cdot |:I\rightarrow \mathbb{Z}$. For every such set, we define $I[n]$ to be the shifted graded set given by the same set with new grading $|i|_{[n]}=|i|+n$ for all $i\in I$. We denote by $\Lambda^I$ the shifted free $\Lambda$-module 
\[ \Lambda^I = \bigoplus_{i\in I} \Lambda[|i|]. \]
Then $\Lambda^I [n] = \Lambda^{I[n]}$. If $J$ is another graded set, we can consider morphisms $f:\Lambda^J\rightarrow \Lambda^I$. Every such map can be represented by a (possibly infinite) matrix $(f_{i,j})_{i\in I,j\in J}$ with $|f_{i,j}|=|i|-|j|$. Such a matrix is column-finite, that is, for every $j$ there are only finitely many non-zero $f_{i,j}$'s. Let us denote by $\Lambda^{I,J}$ the set of such matrices. Every such yields a map $f:\Lambda^J \rightarrow\Lambda^I$. 

A triple of matrices $(A,B,C)$ will be called \emph{composable} if there are graded sets $I,J,K,L$ with $A\in \Lambda^{I,J}, B\in\Lambda^{J,K},C\in\Lambda^{K,L}$. Every morphism $m:\Lambda^{\otimes 3}\rightarrow \Lambda[-1]$ can be extended to the module of all composable triples by putting
\[ m(A,B,C)\in \Lambda^{I[-1],L}: \quad m(A,B,C)_{i[-1],l} = \sum_{j\in J}\sum_{k\in K} m(a_{ij},b_{jk},c_{kl}). \]
From now on we assume $\Lambda = H^*\A \cong \hat{H}^*(G)$, where $\A$ is the endomorphism-dgA of some projective resolution of the trivial $kG$-module $k$. Also let $m:\Lambda^{\otimes 3}\rightarrow \Lambda[-1]$ be some Hochschild cocycle representing the canonical element $\gamma\in\hochschild^{3,-1} \hat{H}^*(G)$. Recall that (see e.g.~\cite{May}) for every composable triple of matrices $(A,B,C)$ with $AB=0$ and $BC=0$ the triple matric Massey product $\left<A,B,C\right>$ is defined and a coset of $A\cdot \Lambda^{J[-1],L} + \Lambda^{I[-1],K}\cdot C$. Notice that there is no obstruction to generalizing May's definition to infinite matrices.

\begin{lemma}\label{mandmasseyproducts}
 For every composable triple $(A,B,C)$ with $AB=0$ and $BC=0$ we have that $m(A,B,C) \in \left<A,B,C\right>$.
\end{lemma}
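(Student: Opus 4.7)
The plan is to use the cycle-selection map $f_1$ and the bounding map $f_2$ from Section~\ref{kanonischel} to produce an explicit representative of the matrix Massey product $\left<A,B,C\right>$ that coincides, at the chain level, with the cocycle computing $m(A,B,C)$. Since $\left<A,B,C\right>$ is a coset, producing a single honest representative equal to $m(A,B,C)$ immediately yields the containment.

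First, I would set up the matrix version of the Massey product. Lift the entries of $A,B,C$ entrywise to cocycle matrices $\bar{A},\bar{B},\bar{C}$ in $\A$; because $AB=0$ and $BC=0$ in $\Lambda$, the matrix products $\bar{A}\bar{B}$ and $\bar{B}\bar{C}$ are coboundaries, so choose $U\in\A^{I[-1],K}$ and $V\in\A^{J[-1],L}$ with $dU=\bar{A}\bar{B}$ and $dV=\bar{B}\bar{C}$; then $U\bar{C}\pm \bar{A}V$ is a cocycle whose class is a representative of $\left<A,B,C\right>$. The indeterminacy coset $A\cdot\Lambda^{J[-1],L}+\Lambda^{I[-1],K}\cdot C$ is exactly what one picks up by varying $U$ and $V$ by cocycle matrices.

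Next, I would take the canonical choices $\bar{A}=f_1(A)$ (entrywise, $(\bar{A})_{ij}=f_1(a_{ij})$) and analogously $\bar{B},\bar{C}$, and set
\[
U_{i,k}=\pm\sum_{j\in J} f_2(a_{ij},b_{jk}),\qquad V_{j,l}=\pm\sum_{k\in K} f_2(b_{jk},c_{kl}).
\]
The sums are finite by column-finiteness of $B$ and $C$. Summing the defining identity $df_2(x,y)=f_1(xy)-f_1(x)f_1(y)$ over the middle index, and using that $f_1$ is $k$-linear with $f_1(0)=0$, the vanishings $AB=0$ and $BC=0$ in $\Lambda$ give $dU=\bar{A}\bar{B}$ and $dV=\bar{B}\bar{C}$; these are thus valid null-homotopies.

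Finally, I would expand the $(i,l)$-entry of $m(A,B,C)$ by summing the four-term expression \eqref{mkonstruktion} over $j$ and $k$. By $k$-bilinearity of $f_2$, the two ``middle'' contributions collapse to
\[
\sum_{j,k} f_2(a_{ij}b_{jk},c_{kl})=\sum_k f_2\bigl((AB)_{ik},c_{kl}\bigr),\qquad \sum_{j,k} f_2(a_{ij},b_{jk}c_{kl})=\sum_j f_2\bigl(a_{ij},(BC)_{jl}\bigr),
\]
both of which vanish since $AB=BC=0$. The remaining two sums reassemble precisely into the $(i,l)$-entry of $U\bar{C}\pm\bar{A}V$, whose class is a representative of $\left<A,B,C\right>_{i,l}$. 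The only real obstacle is sign bookkeeping, which trivialises in the paper's characteristic~$2$ setting; the genuine content of the argument is the collapsing of the two middle terms via bilinearity of $f_2$ and the hypotheses $AB=BC=0$.
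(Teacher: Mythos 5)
Your proof is correct and takes essentially the same approach as the paper: both arguments lift $A,B,C$ entrywise via $f_1$, take $U=f_2(A,B)$ and $V=f_2(B,C)$ as the matrix nullhomotopies, note that the hypotheses $AB=0$ and $BC=0$ together with linearity of $f_2$ kill the two middle terms of the defining four-term formula for $m$, and identify the surviving $f_1(A)f_2(B,C)+f_2(A,B)f_1(C)$ with the standard representative of $\langle A,B,C\rangle$. The paper compresses this into two lines; your version spells out the chain-level and indexing bookkeeping but adds no new ideas.
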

\begin{proof}
 We have
\begin{align*}
 m(A,B,C) &= f_1(A)f_2(B,C) + f_2(AB,C) + f_2(A,BC) + f_2(A,B)f_1(A) \\
          &= f_1(A)f_2(B,C) + f_2(A,B)f_1(C),
\end{align*}
and the last term represents one element of the Massey product.
\end{proof}

A triple $(A,B,C)$ will be called \emph{exact} if it is composable and the sequence
\[ \Lambda^I \xleftarrow{A} \Lambda^J \xleftarrow{B} \Lambda^K \xleftarrow{C} \Lambda^L \]
is exact. 

\begin{lemma}\label{propexactindeterminacy}
Let $A\in\Lambda^{I,J}$ be any matrix, and define $M = \coker A$. Then the following are equivalent:
\begin{itemize}
 \item[(i)] The module $M$ is a direct summand of a realizable module.
 \item[(ii)] For every composable triple $(A,B,C)$ with $AB=0$ and $BC=0$, we have that $0\in\left<A,B,C\right>$.
 \item[(iii)] For some exact triple $(A,B,C)$ we have $0\in\left<A,B,C\right>$.
\end{itemize}
\end{lemma}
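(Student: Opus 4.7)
The strategy is to use Theorem \ref{bkstheorem} to convert (i) into the vanishing of $\Id_M\cup\gamma_G\in \Ext^{3,-1}_\Lambda(M,M)$, and then to compute this Ext-class at the chain level using any free resolution of $M$ whose first differential is $A$. Concretely, if $\cdots\to Q_3\xrightarrow{d_3}Q_2\xrightarrow{d_2}\Lambda^J\xrightarrow{A}\Lambda^I\xrightarrow{\pi}M\to 0$ is such a resolution, then $\Id_M\cup\gamma_G$ is represented by $\pi\circ m(A,d_2,d_3):Q_3\to M[-1]$ (this is the chain-level incarnation of Lemma~\ref{mandmasseyproducts} applied to three consecutive differentials). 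I would establish the equivalences in the cyclic order (iii)~$\Rightarrow$~(i)~$\Rightarrow$~(ii)~$\Rightarrow$~(iii).

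The first and last implications are short. For (iii)~$\Rightarrow$~(i), view the exact triple $(A,B,C)$ as the start of a free resolution of $M$. The hypothesis $0\in\langle A,B,C\rangle$, together with $m(A,B,C)\in\langle A,B,C\rangle$ from Lemma~\ref{mandmasseyproducts}, yields a decomposition $m(A,B,C)=A\sigma+\rho C$ in the indeterminacy. Applying $\pi$ kills $A\sigma$ and exhibits $\pi\rho\cdot C=\pi\rho\cdot d_3$ as a coboundary in $\Hom_\Lambda(Q_\bullet,M[-1])$, so $\Id_M\cup\gamma_G=0$ and Theorem~\ref{bkstheorem} yields~(i). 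For (ii)~$\Rightarrow$~(iii), we simply extend $A$ to an exact triple by taking the next two terms of any free resolution of $M$.

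The real work is in (i)~$\Rightarrow$~(ii). Given an arbitrary composable triple $(A,B,C)$ with $AB=BC=0$, fix a free resolution $Q_\bullet\to M$ extending $A$; exactness at $Q_1$ and $Q_2$ produces lifts $\beta:\Lambda^K\to Q_2$ with $d_2\beta=B$ and then $\gamma:\Lambda^L\to Q_3$ with $d_3\gamma=\beta C$. Applying the Hochschild cocycle identity to the $4$-tuples $(A,d_2,\beta,C)$ and $(A,d_2,d_3,\gamma)$ and exploiting $A d_2=0$, $d_2 d_3=0$ (in characteristic~$2$ no signs intervene), I expect to obtain the key congruence
\[
m(A,B,C)\;\equiv\;m(A,d_2,d_3)\,\gamma\pmod{A\cdot\Hom+\Hom\cdot C}.
\]
Realizability of $M$ now provides $\tau:Q_2\to M[-1]$ with $\pi\circ m(A,d_2,d_3)=\tau d_3$; lifting $\tau$ through $\pi$ (freeness of $Q_2$) and then correcting by a lift through $A$ (freeness of $Q_3$, together with $\ker\pi=\im A$) gives $m(A,d_2,d_3)=A\sigma+\tilde\tau d_3$ for some $\sigma,\tilde\tau$. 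Right-multiplying by $\gamma$ and substituting $d_3\gamma=\beta C$ places $m(A,d_2,d_3)\gamma$ in $A\cdot\Hom+\Hom\cdot C$, and combined with the displayed congruence this yields $0\in\langle A,B,C\rangle$.

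The main obstacle is the displayed congruence: it does not come out of any universal Massey-product formalism because $(A,B,C)$ need not be exact, so it has to be extracted directly from two applications of the Hochschild cocycle relation, with careful bookkeeping to check that the extra terms produced fall into the Massey indeterminacy. Once that congruence is in hand, the remaining steps are routine lifting along surjections of free modules.
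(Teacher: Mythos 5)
Your proof is correct, and for the main implication (i)~$\Rightarrow$~(ii) it takes a genuinely different route from the paper. The paper's argument is shorter but relies on an external tool: it sets $W=i\pi$, observes $WA=0$, applies May's juggling formula $W\langle A,B,C\rangle=\langle W,A,B\rangle C$, lifts an element of $\langle W,A,B\rangle$ through $\pi$ using freeness of $\Lambda^K$, and concludes $0\in\langle A,B,C\rangle$ --- all without ever invoking the Hochschild cocycle relation or passing through $\gamma\cup\id_M=0$. Your version routes (i) through Theorem~\ref{bkstheorem} to get $\gamma\cup\id_M=0$ and then does the work by hand with the cocycle identity. The ``displayed congruence'' you were unsure about does in fact hold, by exactly the two applications you propose: applying the (sign-free, char.~$2$) cocycle relation to $(A,d_2,\beta,C)$ and using $Ad_2=0$, $d_2\beta=B$, $\beta C=d_3\gamma$ gives $m(A,B,C)=Am(d_2,\beta,C)+m(A,d_2,d_3\gamma)+m(A,d_2,\beta)C$; applying it to $(A,d_2,d_3,\gamma)$ and using $Ad_2=0$, $d_2d_3=0$ gives $m(A,d_2,d_3\gamma)=Am(d_2,d_3,\gamma)+m(A,d_2,d_3)\gamma$; combining these yields $m(A,B,C)\equiv m(A,d_2,d_3)\gamma$ modulo $A\cdot\Hom+\Hom\cdot C$, and the lifting argument you describe then finishes the job. (One should also note, as you implicitly do, that the matrix-valued $m$ inherits the Hochschild cocycle relation from the scalar one term-by-term, and that column-finiteness makes all the sums finite.) What each approach buys: the paper's proof is cleaner and avoids the bookkeeping, at the cost of citing the juggling formula and tacitly extending it to the infinite-matrix setting; yours is more self-contained, using only the realization criterion and the cocycle identity, and so sidesteps any question about whether May's formula carries over verbatim to infinite matrices. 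Your (iii)~$\Rightarrow$~(i) and (ii)~$\Rightarrow$~(iii) coincide with the paper's.
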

\begin{proof}
For $(i)\Rightarrow (ii)$, let $M$ be a direct summand of $H^*N$, where $N$ is some dg-$\A$-module. Then there are maps $M\xrightarrow{i} H^*N \xrightarrow{r} M$ with $ri=\id_M$. Let $\pi:\Lambda^I \rightarrow M$ be the projection map, and put $W = i\pi$. Then $WA=0$, so that $\left<W,A,B\right>$ is defined, and the juggling formula (see Corollary 3.2.(iii) of \cite{May}) yields $W\left<A,B,C\right> = \left<W,A,B\right>C$ as cosets of $W \Lambda^{I[-1],K} C$. Let $E:\Lambda^K\rightarrow H^*N[-1]$ be some element in $\left<W,A,B\right>$. Since $\Lambda^K$ is free, we know that the composition $r\circ E$ lifts as $\Lambda^K \xrightarrow{S} \Lambda^{I[-1]} \xrightarrow{\pi} M[-1]$ for some matrix $S$. But then
\begin{align*}
  \pi S C = r E C \in r \left<W,A,B\right> C = r W \left<A,B,C\right> = \pi \left<A,B,C\right>. 
\end{align*}
This means that there is some matrix $T$ such that $AT + SC \in \left<A,B,C\right>$, which implies $0\in\left<A,B,C\right>.$

The implication $(ii)\Rightarrow (iii)$ is obvious. For $(iii)\Rightarrow (i)$, note that 
\[ M \leftarrow \Lambda^I \xleftarrow{A} \Lambda^J \xleftarrow{B} \Lambda^K \xleftarrow{C} \Lambda^L \]
is the beginning of a (shifted) free resolution of $M$. We have $m(A,B,C)\in \Lambda^{I[-1],L}$, and a representative of $\gamma\cup\id_M\in\Tate^{3,-1}_\Lambda(M,M)$ is given by the composition
\[ g:\Lambda^L \xrightarrow{m(A,B,C)} \Lambda^{I[-1]} \rightarrow (\coker A)[-1]=M[-1]. \]
By assumption and Proposition~\ref{mandmasseyproducts} $m(A,B,C) = AX+YC$ for some matrices $X$ and $Y$, so that this composition equals
\[ \Lambda^L \xrightarrow{C} \Lambda^K \xrightarrow{Y} \Lambda^{I[-1]} \rightarrow M[-1], \]
which in turn says that $g$ is the coboundary of $\Lambda^K \xrightarrow{Y} \Lambda^{I[-1]} \rightarrow M[-1]$; hence $\gamma\cup\id_M=0$. By Theorem~1.1 of \cite{bks}, $M$ is a direct summand of some realizable module.
\end{proof}

\subsection{The group of quaternions}
Let $G=Q_8$. We shall make use of one of the implications of Proposition~\ref{propexactindeterminacy} to prove the existence of a $\hat{H}^*G$-module which detects the non-triviality of $\gamma_G$:
\begin{satz}\label{mquat8module}
 The cokernel of the map
\[ \Lambda[-1]\oplus \Lambda[-1] \xrightarrow{\begin{pmatrix} y & x+y \\ x & y \end{pmatrix}} \Lambda \oplus \Lambda \]
is not a direct summand of a realizable $\hat{H}^*G$-module.
\end{satz}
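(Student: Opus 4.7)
The plan is to apply Proposition~\ref{propexactindeterminacy} by exhibiting a composable triple $(A, B, C)$ with $AB = BC = 0$ for which $0 \notin \langle A, B, C \rangle$. By Lemma~\ref{mandmasseyproducts}, the specific cocycle value $m(A, B, C)$ always lies in the Massey coset, so the task reduces to producing $B, C$ and verifying that $m(A, B, C)$ cannot be written as $A X + Y C$ for any matrices $X, Y$ of the appropriate bigrading.

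First I would identify $\ker A$ and $\ker B$ by a direct calculation using the relations $x^2 + y^2 = xy$ and $y^3 = 0$, leading to the natural choice
\[
 B = \begin{pmatrix} x+y & y \\ y & x \end{pmatrix}, \qquad C = \begin{pmatrix} x & y \\ y & x+y \end{pmatrix},
\]
both with degree-$1$ entries; every entry of $AB$ and of $BC$ then collapses to $2xy = 0$ in characteristic~$2$. The columns of $B$ in fact generate $\ker A$, and similarly those of $C$ generate $\ker B$, so $(A, B, C)$ is exact and extends to the start of a free resolution of $M$, though only composability will be used below.

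Next I would compute $m(A, B, C)$ from Theorem~\ref{mquat8}: since no entry of $A, B, C$ involves $s$, and $m$ is $k$-linear in each slot, only the triples $(x, y, x)$, $(x, y, x^2)$, $(x^2, y, x)$ could possibly contribute, and after multilinear expansion only $m(x, y, x) = xy$ actually appears. Straightforward bookkeeping yields
\[
 m(A, B, C) = \begin{pmatrix} xy & 0 \\ xy & xy \end{pmatrix}.
\]

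The hard part will be the final step, ruling out $m(A, B, C) = AX + YC$ for any $2 \times 2$ matrices $X, Y$ with degree-$1$ entries. Writing $X = \smatrix{\alpha & \beta \\ \gamma & \delta}$ and $Y = \smatrix{\alpha' & \beta' \\ \gamma' & \delta'}$, expanding each entry in the basis $\{x, y\}$ of $\Lambda^1$, and using $\{x^2, y^2\}$ as a basis of $\Lambda$ in degree $2$, the matrix equation becomes an $\mathbb{F}_2$-linear system of eight equations in sixteen unknowns. Three particular combinations of these equations should eliminate every $Y$-variable and read
\begin{align*}
 \alpha_1 + \beta_1 + \beta_2 + \gamma_2 + \delta_1 &= 1, \\
 \alpha_1 + \alpha_2 + \beta_2 + \gamma_1 + \delta_1 + \delta_2 &= 0, \\
 \alpha_2 + \beta_1 + \gamma_1 + \gamma_2 + \delta_2 &= 0;
\end{align*}
each of the eight $X$-variables then appears in exactly two of the three left-hand sides, so adding the three identities forces $0 = 1$ in $\mathbb{F}_2$. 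This contradiction gives $0 \notin \langle A, B, C \rangle$, and Proposition~\ref{propexactindeterminacy} yields that $M = \coker A$ is not a direct summand of any realizable $\hat H^*(Q_8)$-module.
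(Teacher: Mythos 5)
Your argument is correct in substance but takes a genuinely different route from the paper's, so let me compare. The paper exploits the identity $A^2=0$ and works with the triple $(A,A,A)$; you instead introduce $B=\smatrix{x+y & y \\ y & x}$ and $C=\smatrix{x & y \\ y & x+y}$ (which are just column- and row-permuted copies of $A$), and verify $AB=BC=0$. Both choices yield the same Massey obstruction, and your value $m(A,B,C)=\smatrix{xy & 0 \\ xy & xy}$ differs from the paper's stated $m(A,A,A)=\smatrix{x^2 & 0 \\ x^2 & x^2}$ only by $y^2\cdot\id$, which is immaterial since the class is only meaningful modulo the indeterminacy. The more significant divergence is in the final step: the paper notes that $D=\smatrix{x & y \\ x+y & x}$ satisfies $AD=DA=0$, so $\tr\bigl((AQ+RA)D\bigr)=\tr(QDA)+\tr(RAD)=0$ for all $Q,R$, while $\tr\bigl(m(A,A,A)D\bigr)=x^2y\neq 0$ --- a two-line argument. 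You instead expand $AX+YC=m(A,B,C)$ into an $\mathbb{F}_2$-linear system of $8$ equations in $16$ unknowns and show it is inconsistent. This works, and I verified that an inconsistency does exist (for instance, summing the $y^2$-component of the $(1,1)$-entry, both components of the $(1,2)$-entry, the $x^2$-component of the $(2,1)$-entry, and the $y^2$-component of the $(2,2)$-entry cancels every $X$- and $Y$-variable but leaves $0=1$). However, your write-up hedges (``should eliminate every $Y$-variable'') and does not actually exhibit the three combinations of the eight scalar equations that you claim produce them --- a referee would insist that you display them. The paper's trace trick buys brevity and avoids this bookkeeping entirely; if you prefer the direct linear-algebra route, you should record the explicit combinations, or better, adapt the trace argument to your $(A,B,C)$ by finding $D$ with $DA=0$ and $CD=0$.
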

\begin{proof}
Let $A=\left(\begin{smallmatrix} y & x+y \\ x & y \end{smallmatrix}\right)$; then $A^2=0$ and therefore the Massey product $\left<A,A,A\right>$ is defined. We claim that it does not contain $0$. An explicit calculation using the description of $m$ given in Theorem~\ref{mquat8} yields
\[ m(A,A,A) = \begin{pmatrix} x^2 & 0 \\ x^2 & x^2 \end{pmatrix}. \]
Let us denote the latter matrix by $B$, then by Proposition~\ref{propexactindeterminacy} we need to prove that $B$ is not of the form $B = A Q + R A$ for some $2\times 2$-matrices $Q$ and $R$. To do so, define $D=\smatrix{x & y \\ x+y & x}$; then $AD=DA=0$. If we denote by $\tr$ the trace of a matrix, then we have
\[ \tr(B D)=\tr(A Q D)+\tr(R A  D)
 =\tr(Q  D A)+\tr(R A  D)=0 \]
(note that these computations take place in a commutative ring). But
\[ \tr(B D)=\tr \begin{pmatrix} 0 & \ast \\ \ast & x^2y \end{pmatrix} = x^2y\neq 0, \]
a contradiction.
\end{proof}
\begin{remark}
 The triple $(A,A,A)$ is actually exact, but we do not need this.
\end{remark}

In order to construct a module which is not a direct summand of a realizable one, it is often enough to consider 'ordinary' Massey products, i.e., the case of $1\times 1$-matrices; this is true for example in the cases $G=\mathbb{Z}/2\mathbb{Z}\times\mathbb{Z}/2\mathbb{Z}$ (\cite{bks}, Example 7.7) and $G=\mathbb{Z}/3\mathbb{Z}$ (characteristic $3$, \cite{bks}, Example 7.6). In our present case, it is not that easy:
\begin{lemma} Let $k=\mathbb{F}_2$ be the field with $2$ elements. 
For all $a,b,c\in\hat{H}^*(Q_8)$ satisfying $ab=0$ and $bc=0$ we have $0\in\left<a,b,c\right>$.
\end{lemma}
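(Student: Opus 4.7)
The plan is to invoke Proposition \ref{mandmasseyproducts}, which places $m(a,b,c)$ inside $\langle a,b,c\rangle$, and prove the stronger statement that $m(a,b,c)=0$ whenever $ab=bc=0$, so the indeterminacy of the Massey product never has to do any work. My strategy splits into a reduction to polynomial triples and a final vanishing argument that exploits the extreme sparsity of the table for $m$ given in Theorem \ref{mquat8}.

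First I would use that $s$ is a central unit in $\Lambda=\hat H^*(Q_8)$ to assume without loss of generality that $a,b,c$ lie in the polynomial subring $R=k[x,y]/(x^2+xy+y^2,\,y^3)\subset\Lambda$ spanned by $\B=\{1,x,y,x^2,y^2,x^2y\}$. Indeed, scaling each entry by a power of $s$ leaves the conditions $ab=bc=0$ and ``$0\in\langle a,b,c\rangle$'' unchanged, since both the indeterminacy $a\Lambda+\Lambda c$ and the Massey product transform equivariantly. The one subtlety is that the formula of Theorem \ref{mquat8} is not $s$-linear in the first slot: passing from $a$ to $sa$ contributes an extra summand $a\cdot\class(h(b,c))$. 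But this term sits in $a\Lambda$, so it is absorbed by the indeterminacy and does not affect the conclusion.

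Once $a,b,c\in R$, I would apply $k$-trilinearity of $m$ together with the table of Theorem \ref{mquat8}, whose only non-zero values on $\B^{\times 3}$ are $m(x,y,x)$, $m(x,y,x^2)$ and $m(x^2,y,x)$. All three have middle entry $y$ and outer entries of polynomial degree one or two. Expanding $a,b,c$ in the basis $\B$ therefore kills $m(a,b,c)$ outright unless $\deg_{\mathrm{poly}}(b)=1$ and at least one of $a,c$ also has polynomial degree one. The remaining case is handled by the identity in $R$
\[
(\lambda x+\mu y)(\lambda'x+\mu'y)=(\lambda\lambda'+\lambda\mu'+\mu\lambda')x^2+(\lambda\mu'+\mu\lambda'+\mu\mu')y^2
\]
(using $xy=x^2+y^2$) which, combined with the $k$-linear independence of $x^2$ and $y^2$ and a finite enumeration of the nine non-trivial $\mathbb F_2$-cases, shows that two non-zero polynomial-degree-one elements of $R$ never multiply to zero. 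Consequently the constraints $ab=0$ and $bc=0$ wipe out every surviving basis contribution, giving $m(a,b,c)=0$ and hence $0\in\langle a,b,c\rangle$. The main obstacle is purely organisational: tracking the polynomial degrees and the $s$-scaling reduction carefully; the actual content of the proof is exactly the sparsity of the cocycle $m$ together with the absence of zero divisors in the degree-one part of $R$.
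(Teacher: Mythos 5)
Your proposal is correct and follows essentially the same route as the paper: reduce to the polynomial (degree $\leq 3$) part via $s$-scaling, absorb the extra summand $a\cdot\class(h(b,c))$ coming from an odd $s$-power into $a\Lambda$, observe from the sparsity of the table that $m(a,b,c)$ can only be non-zero when $b$ has degree $1$ and at least one of $a,c$ has degree $1$, and then use that over $\mathbb{F}_2$ two non-zero degree-one elements of $\hat{H}^*(Q_8)$ never multiply to zero. The one small difference is cosmetic: you establish the slightly sharper claim that $m$ vanishes outright on every polynomial triple with $ab=bc=0$, whereas the paper (for $|b|\geq 2$) allows a value $x^2y$ that it then checks lies in the indeterminacy; both versions of that step succeed, and the logical structure and the key $\mathbb{F}_2$-specific input are the same.
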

\begin{proof}
By \cite{bks}, Lemma~5.14, the class  $m(a,b,c)$ is contained in the Massey product $\left<a,b,c\right>$. Therefore, it is enough to show that $m(a,b,c)$ is an element of the indeterminacy 
\[ a\cdot\hat{H}^{|b|+|c|-1}(G)+\hat{H}^{|a|+|b|-1}(G)\cdot c \]
for all $a,b,c$. By construction of $m$ it is enough to do so for those triples $(a,b,c)$ and $(sa,b,c)$ with $a,b,c\in\left\{1,x,y,x+y,x^2,y^2,x^2+y^2,x^2y\right\}$ which satisfy $ab=0$ and $bc=0$.

If $|a|,|b|\leq 1$, then $ab=0$ implies $a=0$ or $b=0$ (here we use that $k=\mathbb{F}_2$). If $|b|\geq 2$, then $m(a,b,c)=0$ unless $b\in\{y^2,y^2+x^2\}$ and $a,c\in\{x,x+y\}$, in which case $m(a,b,c)=x^2y$ is divisible by $a$. So we can assume that $|b|=1$ and therefore $|a|\geq 2$ and $|c|\geq 2$, which implies $m(a,b,c)=0$ by Theorem~\ref{mquat8}.

For $m(sa,b,c)$ we have by \eqref{mgleichung}
\[
 m(sa,b,c)=a\, m(s,b,c)+m(a,b,c)s.
\]
We have already seen that the second summand lies in the indeterminacy; the first summand is contained in
\begin{align*}
 a\cdot \hat{H}^{|s|+|b|+|c|-1}(G)=sa\cdot \hat{H}^{|b|+|c|-1}(G)
\end{align*}
and therefore in the indeterminacy.
\end{proof}

\bemerkung Note that the Proposition is not true for arbitrary fields of characteristic $2$: If the field $k$ contains an element $\alpha\in k$ satisfying $\alpha^2+\alpha+1=0$, then the Massey product
\[ \left< \alpha x+y,\alpha^2 x+y,\alpha x+y \right> \]
is defined and does not contain $0$.

\subsection{Generalized quaternions}
The picture changes as soon as we consider generalized quaternion groups $G=Q_{4t}$ with $t\geq 4$. It turns out that there is no module detecting the non-triviality of the canonical element $\gamma_G$.
% But let us start with a technical lemma.

% A matrix $A\in\Lambda^{I,J}$ will be called \emph{$0$-$1$-diagonal} if there are subsets $I'\subseteq I$, $J'\subseteq J$ and a bijection $\sigma:I'\rightarrow J'$ such that
% \[ A_{ij} \neq 0 \Leftrightarrow A_{ij} = 1 \Leftrightarrow \text{$(i,j)\in I'\times J'$ and $\sigma(i)=j$}. \]
% \begin{lemma}[Reduced row echelon form]\label{ranknormalform}
% Assume that $\Lambda$ is the graded algebra $k$ concentrated in degree $0$, and let $A\in k^{I,J}$, where all elements of $I\cup J$ have the same degree. Then there are invertible matrices $B\in k^{I,I}$ and $C\in k^{J,J}$ such that the matrix $D=BAC\in k^{I,J}$ is $0$-$1$-diagonal.
% \end{lemma}
% \begin{proof}
% The finite version of this is well-known linear algebra. In the general case we choose a basis $(f_s)_{s\in S}$ of the image of $A$ and extend this to a basis $(f_s)_{s\in T}$ of $k^I$ (where $S\subseteq T$). Then for every $s\in S$, we choose a preimage $g_s=A^{-1}(f_s)$ and get a set of linearly independent vectors in $k^J$, which we then extend to a basis $(g_s)_{s\in R}$ of $k^J$ (where $S\subseteq R$). Using bijections $R\cong J$ and $T\cong I$ we obtain the desired result.
% \end{proof}

Let $m$ be as in Theorem~\ref{mquat16}, and write $m=m'+m''$, where $m'$ is defined in \eqref{definitionofmprime}. Notice that $m'$ is a Hochschild cocycle, because it is defined to be $s$-periodic, so it is enough to check the cocycle condition on elements in $\B$. But on these elements, $m'$ agrees with $m$. Hence $m'$ is a cocycle, and so is $m''$. Let $\gamma'$ and $\gamma''$ be the corresponding elements in $\hochschild^{3,-1} \hat{H}^*(G)$. In the next two propositions we will show that, for every module $M$, $\gamma'\cup\id_M=0$ and $\gamma''\cup\id_M=0$ in $\Ext^{3,-1}(M,M)$, respectively. It will then follow that $M$ is a direct summand of a realizable module.

\begin{lemma}\label{gammaprimecupmzero}
For every $\Lambda$-module $M$ we have $\gamma'\cup\id_M=0$.
\end{lemma}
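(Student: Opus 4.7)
By Proposition~\ref{propexactindeterminacy}, it suffices to show that for every $\Lambda$-module $M$ with presentation $\Lambda^J \xrightarrow{A} \Lambda^I \to M \to 0$ extended to an exact triple $(A,B,C)$, one can find matrices $X$ and $Y$ with $m'(A,B,C) = AX + YC$. My plan is to give a compact matrix-level formula for $m'$ and then exploit exactness of $(A,B,C)$.

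The first step is to derive a closed-form description of $m'$. Let $\pi : \Lambda \to \Lambda$ be the $k$-linear projection onto $\mathrm{span}_k\{s^n x, s^n x^2 : n \in \mathbb{Z}\}$ and let $p_y : \Lambda \to \Lambda[-1]$ be the $k$-linear map extracting the coefficient of $y$ (a Laurent polynomial in $s$). Comparing with the values of $m$ on $\B^3$ from Theorem~\ref{mquat16} and using the $s$-linearity of $m'$, one checks the trilinear identity
\[ m'(a,b,c) \;=\; \pi(a)\,p_y(b)\,\pi(c), \]
which extends coordinatewise to matrices as $m'(A,B,C) = \pi(A)\,p_y(B)\,\pi(C)$. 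Writing $\pi(a) = x\cdot\sigma(a)$ with $\sigma(a) := c_x(a) + c_{x^2}(a)\,y$, one obtains the crucial factorization
\[ m'(A,B,C) \;=\; x^2\cdot\sigma(A)\,p_y(B)\,\sigma(C), \]
so every entry of $m'(A,B,C)$ is divisible by $x^2$.

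The plan for the remaining step is to show that the matrix $\pi(A)\,p_y(B)$ can be written as $AP + QB$ for suitable matrices $P,Q$, and dually that $B\,\pi(C)$ can be written as $RC$ for some matrix $R$. Combining these gives
\[ m'(A,B,C) \;=\; \pi(A)\,p_y(B)\,\pi(C) \;=\; (AP + QB)\,\pi(C) \;=\; A(P\,\pi(C)) + Q\cdot B\,\pi(C) \;=\; A(P\,\pi(C)) + (QR)\,C, \]
so $X := P\,\pi(C)$ and $Y := QR$ do the job. The two factorizations rest on the following scalar identity: for $a,b\in\Lambda$ with $ab=0$, the product $\pi(a)\,p_y(b)$ lies in the ideal $(a)+(b)$. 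Indeed, on basis pairs either $\pi(a)=0$ or $p_y(b)=0$ (so the product vanishes trivially), or else the ideal-theoretic structure $\Ann(x)=(z)$, $\Ann(z)=(x)$, together with $xz=0$, $x^2=xy$ and $y^3=0$, forces the expression into $(a)+(b)$.

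The main obstacle is upgrading this scalar identity to a matrix identity compatible with the exact triple $(A,B,C)$. Because $\pi$ and $p_y$ are only $k$-linear — not $\Lambda$-linear — the relation $AB = 0$ does not directly imply a corresponding relation for $\pi(A)\cdot p_y(B)$; one must carefully aggregate the entry-wise factorizations using that $\ker A = \im B$ and $\ker B = \im C$, so that the locally-chosen $\alpha$'s and $\beta$'s assemble into globally defined matrices $P,Q,R$. This aggregation step is the delicate heart of the proof, and where the specific ideal structure $\Lambda/(x)\cong k[z,s^{\pm 1}]/(z^3)$ and its dual $\Lambda/(z)\cong k[x,s^{\pm 1}]/(x^3)$ enter decisively to control the failure of $\Lambda$-linearity of the projections.
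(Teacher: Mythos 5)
There is a genuine gap, and you flag it yourself: the ``aggregation step'' upgrading the entry-wise ideal membership to matrix factorizations $\pi(A)p_y(B)=AP+QB$ and $B\pi(C)=RC$ is not carried out, and in fact neither factorization is straightforward to obtain. The paper avoids this entirely by working with a \emph{minimal} free resolution, which you never invoke; minimality is the real engine of the argument. Concretely, minimality forces $A_1=B_1=C_1=0$, and then $0=(BC)_{y^2}=B_1C_{y^2}+B_yC_y+B_{y^2}C_1$ gives $B_yC_y=0$. Once one has this, the paper does not need your two-step decomposition: it simply takes $W:=A_xB_yx+A_{x^2}B_yx^2$ --- which is precisely your $\pi(A)p_y(B)$ --- together with $V:=B_yC_{y^2}y^2$, and a direct computation (using $y^3=0$, $x^2=xy$, $B_yC_y=0$, char $2$) shows $m'(A,B,C)=AV+WC$, already in the desired form $A\Lambda^{J[-1],L}+\Lambda^{I[-1],K}C$. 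Your plan to further factor $W$ as $AP+QB$ is both unnecessary (since $W$ itself sits in $\Lambda^{I[-1],K}$) and doubtful: e.g.\ for a single generator $A=(x)$, $\pi(A)=A-A_yy-A_{y^2}y^2-A_{x^2y}x^2y$ is not visibly of the form $AP+QB$ because the correction terms are not multiples of $A$ or in the image of $B$. Similarly $B\pi(C)=-B(C_yy+C_{y^2}y^2+C_{x^2y}x^2y)$ is not visibly a right multiple of $C$.

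Your compact identity $m'(a,b,c)=\pi(a)p_y(b)\pi(c)$ is a correct and nice repackaging of the values from Theorem~\ref{mquat16}, and it does implicitly underlie the paper's choice of $W$; but to turn it into a proof you should (a) pass to a minimal resolution so that $A_1=B_1=C_1=0$ and $B_yC_y=0$, and (b) observe that $m'(A,B,C)-\pi(A)p_y(B)\,C=\pi(A)p_y(B)\bigl(\pi(C)-C\bigr)$ reduces, after the relations in $\Lambda$, to $A_xB_yC_{y^2}x^3=A\cdot(B_yC_{y^2}y^2)$, which is the term $AV$. That closes the gap without any of the speculative matrix factorizations.
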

\begin{proof}
Notice that every matrix $A\in\Lambda^{I,J}$ can be uniquely written as a sum
\[ A = A_1 + A_x x + A_y y + A_{x^2} x^2 + A_{y^2} y^2 + A_{x^2y} x^2y, \]
where the six matrices on the right hand side lie in $k[s^{\pm 1}]^{I,J[?]}$. The first step in our proof will be to find a suitable free resolution 
\[ M \leftarrow \Lambda^I \xleftarrow{A} \Lambda^J \xleftarrow{B} \Lambda^K \xleftarrow{C} \Lambda^L \]
of $M$. We begin with the definition of $A$. Let $I$ be a minimal set of generators of the right $\Lambda$-module $M$, that is, $I$ generates $M$ but any proper subset of $I$ does not generate $M$ (in case that $M$ is not finitely generated one has to use Zorn's lemma to prove the existence of $I$). 
The inclusion $I\subseteq M$ induces a surjection $\Lambda^I\rightarrow M$. Let $J$ be a minimal set of generators for the kernel of that map; then we obtain an exact sequence $\Lambda^J\xrightarrow{A}\Lambda^I\rightarrow M$. 
Taking $K$ to be a minimal set of generators for the kernel of $A$, we get a map $\Lambda^K\xrightarrow{B} \Lambda^J$ onto that kernel, and finally we let $L$ be a minimal set of generators for the kernel of $B$ to obtain an exact sequence
\begin{align}\label{freeresolution}
 M \leftarrow \Lambda^I \xleftarrow{A} \Lambda^J \xleftarrow{B} \Lambda^K \xleftarrow{C} \Lambda^L. 
\end{align}
We claim that $A_1=0$. Assume the contrary and let $i\in I$, $j\in J$ be such that $(A_1)_{i,j}\neq 0$.  Then $I-\{i\}$ generates $M$ which contradicts the choice of $I$. Similarly one shows that $B_1=0$ and $C_1=0$, and therefore $B_yC_y = (BC)_{y^2} = 0$.

Now define $W=A_xB_y x + A_{x^2}B_y x^2$ and $V = B_yC_{y^2}y^2$. Then
\begin{align*}
 AV &= A_xB_yC_{y^2} x^3, \\
 WC &= A_xB_yC_x x^2 + A_x\underbrace{B_yC_y}_0 x^2 + A_xB_yC_{x^2}x^3 + A_xB_yC_{y^2}x^3 \\
   & \quad + A_{x^2}B_yC_x x^3 + A_{x^2}\underbrace{B_yC_y}_0 x^3.
\end{align*}
Therefore, $m'(A,B,C) = AV+WC$, and by Proposition~\ref{propexactindeterminacy} we get $\gamma'\cup\id_M=0$.
\end{proof}

\begin{lemma}\label{gammadblprimecupmzero}
For every $\Lambda$-module $M$ we have $\gamma''\cup\id_M=0$.
\end{lemma}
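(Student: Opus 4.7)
The plan is to prove the stronger statement that $\gamma''$ is already zero in $\hochschild^{3,-1}\hat{H}^*(G)$; then $\gamma''\cup\id_M=0$ is automatic for every $M$. Concretely, I will exhibit a Hochschild cochain $g''\in C^{2,-1}(\Lambda,\Lambda)$ with $\delta g''=m''$.

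The starting point is that $m''$ has a simple closed form. Decompose $\Lambda=\Lambda^{\mathrm{ev}}\oplus\Lambda^{\mathrm{odd}}$ by parity of the $s$-exponent, so that $\Lambda^{\mathrm{ev}}=k[s^{\pm 2}]\cdot\B$ and $\Lambda^{\mathrm{odd}}=s\Lambda^{\mathrm{ev}}$, and write $\alpha\mapsto\alpha^{\mathrm{odd}}$ for the $k$-linear projection onto $\Lambda^{\mathrm{odd}}$. Combining the formulas of Theorem~\ref{mquat16} with the definition~\eqref{definitionofmprime} of $m'$ gives
\[ m''(\alpha,\beta,\gamma) \;=\; \alpha^{\mathrm{odd}}\cdot\class(h(\beta,\gamma)), \]
where $\class(h)$ is extended $s$-bilinearly from the table to a $k$-bilinear map $\Lambda\otimes\Lambda\to\Lambda[-1]$. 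Moreover $\class(h)$ is itself a Hochschild $(2,-1)$-cocycle: evaluating $\delta m=0$ on quadruples $(s,b,c,d)$ with $b,c,d\in\B$ and substituting the formulas of Theorem~\ref{mquat16} yields, after dividing by $s$, exactly the cocycle identity $b\class(h(c,d))+\class(h(bc,d))+\class(h(b,cd))+\class(h(b,c))d=0$, which then extends $s$-linearly to all of $\Lambda$.

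The main step is to construct a primitive $\chi:\Lambda\to\Lambda[-1]$ for $\class(h)$. I propose the ansatz $\chi(y)=1$, $\chi(x^2y)=x^2$, and $\chi(b)=0$ for $b\in\{1,x,x^2,y^2\}$, extended $s$-linearly, and verify row by row against the table of $\class(h(b,c))$ that $b\chi(c)+\chi(bc)+\chi(b)c=\class(h(b,c))$ for all $b,c\in\B$. This is where the argument really lives: the table is small, but the ring relations $x^2=xy$ and $y^3=0$ must be handled carefully. Once $\chi$ is in hand, setting $g''(\alpha,\beta)=\alpha^{\mathrm{odd}}\chi(\beta)$ and using the parity identity $(\alpha\beta)^{\mathrm{odd}}=\alpha^{\mathrm{ev}}\beta^{\mathrm{odd}}+\alpha^{\mathrm{odd}}\beta^{\mathrm{ev}}$ (valid in characteristic $2$), a direct calculation gives
\[ \delta g''(\alpha,\beta,\gamma) \;=\; \alpha^{\mathrm{odd}}\bigl(\beta\chi(\gamma)+\chi(\beta\gamma)+\chi(\beta)\gamma\bigr) \;=\; \alpha^{\mathrm{odd}}\class(h(\beta,\gamma)) \;=\; m''(\alpha,\beta,\gamma). \]
Thus $\gamma''=0\in\hochschild^{3,-1}\hat{H}^*(G)$ and the lemma follows. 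The only non-routine step is guessing $\chi$; everything else is bookkeeping with the $s$-parity decomposition.
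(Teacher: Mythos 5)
Your argument is correct, and in fact proves something strictly stronger than the statement: you show that $\gamma''=0$ in $\hochschild^{3,-1}\hat{H}^*(G)$, not merely that $\gamma''\cup\id_M=0$ for every $M$. I have checked the three non-routine steps. The closed form $m''(\alpha,\beta,\gamma)=\alpha^{\mathrm{odd}}\,\class(h(\beta,\gamma))$ follows from $m(sa,b,c)=sm(a,b,c)+sa\,\class(h(b,c))$ together with the $s^2$-periodicity and the $s$-linearity of $\class(h)$ in each slot. The cocycle relation for $\class(h)$ follows from $\delta m=0$ applied to $(s,b,c,d)$, exactly as you say. And I verified your ansatz $\chi(y)=1$, $\chi(x^2y)=x^2$, $\chi=0$ on $\{1,x,x^2,y^2\}$ against all $36$ entries of the $\class(h(b,c))$ table, using the relations $x^2=xy$, $y^3=0$ (so $x^3=x^2y=xy^2$ and $x^4=x^2y^2=0$); it checks out. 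The parity bookkeeping for $\delta g''=m''$ is also right, with the key cancellation being $\alpha\beta^{\mathrm{odd}}+(\alpha\beta)^{\mathrm{odd}}=\alpha^{\mathrm{odd}}\beta$ in characteristic~$2$.

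The paper does not attempt to show $\gamma''=0$. Instead it only adds a coboundary $\partial g$ to get a new representative $\tilde m=m''+\partial g$, builds a minimal free resolution $M\leftarrow\Lambda^I\leftarrow\Lambda^J\leftarrow\Lambda^K\leftarrow\Lambda^L$ with the generating sets normalized to lie in fixed degree windows, observes that these degree constraints force the entries of $A$, $B$, $C$ into a region where $\tilde m$ vanishes termwise, and then invokes Proposition~\ref{propexactindeterminacy}. That route is uniform with the proof of Proposition~\ref{gammaprimecupmzero}, which genuinely needs the resolution (since $\gamma'=\gamma_G\neq 0$ there), but for $\gamma''$ it is more work than necessary. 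Your approach eliminates the free resolution and the Massey-product machinery entirely for this half; combined with Proposition~\ref{gammaprimecupmzero} it gives a cleaner derivation of the realizability statement in Theorem~\ref{haupttheorem} for $t\ge 4$, and it also clarifies that $\gamma'$ alone already represents the full obstruction class $\gamma_G$.
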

\begin{proof}
We start with a slight modification of the representative $m''$. Put $\B=\{1,x,z,x^2,z^2,x^3\}$, and let us define the function $g$ as follows: For all integers $i$, put
\begin{align*}
 g(s^{-1}x^2,s^ix) &= s^{i-1}z^2, \\
 g(s^{-1}x^2,s^iz) &= s^{i-1}x^2,
\end{align*}
and $g(a,b)=0$ on all other elements $a,b$ in $\{s^ic \mid c\in\B\}$. Then $\tilde{m} = m'' + \partial g$ defines a new representative for the element $\gamma''$. For all $a,b,c\in \B$ and $i,j\geq 1$ we have
\[ \tilde{m}(a,s^ib,s^jc) = m''(a,s^ib,s^jc) + ag(s^ib,s^jc) + g(s^iab,s^jc) + g(a,s^{i+j}bc) + g(a,s^ib)s^jc, \]
and by definition of $m''$ and $g$ each summand on the right hand side vanishes. We also have that
\begin{align*}
 \tilde{m}(s^{-1}a,s^ib,s^jc) &= m''(s^{-1}a,s^ib,s^jc) + s^{-1}a\underbrace{g(s^ib,s^jc)}_0 + \underbrace{g(s^{i-1}ab,s^jc)}_0 \\
 & \quad + g(s^{-1}a,s^{i+j}bc) + g(s^{-1}a,s^ib)s^jc.
\end{align*}
We claim that this is zero if $|a|\geq 2$, $|b|\geq 1$ and $|c|\geq 1$. In that case, we have $|bc|\geq 2$ and therefore $g(s^{-1}a,s^{i+j}bc)=0$, so that it remains to show $m''(s^{-1}a,s^ib,s^jc) = g(s^{-1}a,s^ib)s^jc$, or equivalently
\[ m''(s^{-1}a,b,c)= g(s^{-1}a,b)c. \]
To see this, we consider the several cases for $a$ separately. If $a=x^3$, then 
\[ m''(s^{-1}a,b,c) = s^{-1}x^3 \class(h(b,c)), \]
where $h$ is as in Theorem~\ref{mquat16}. But $|h(b,c)|\geq 1$, so the last expression vanishes, as does $g(s^{-1}a,b)c$. For $a=z^2$ we get
\[ m''(s^{-1}a,b,c) = s^{-1}z^2 \class(h(b,c)), \]
but $|h(b,c)|\geq 2$ or $\class(h(b,c))$ is divisible by $x$, and therefore again the right hand side vanishes. The last case is $a=x^2$ where we need to show
\[ s^{-1}x^2\class(h(b,c)) = g(s^{-1}x^2,b)c. \]
Both sides vanish for degree reasons unless $|b|=|c|=1$, and in that case both sides will equal $s^{-1}x^3$ if $b\neq c$, and $0$ otherwise.

The rest is easy. We start with a free resolution of $M$ as in the proof of Proposition~\ref{gammaprimecupmzero}. We can (and do) assume that the degree $|i|$ of every element $i\in I$ lies in $\{0,1,2,3\}$. Also, we assume that the degree of every element of $J$ lies in $\{-1,0,1,2\}$, the degree of every element of $K$ belongs to $\{-8,-7,-6,-5\}$, and the degree of every element of $L$ is in $\{-15,-14,-13,-12\}$. Then we know that every non-zero entry of $B$ and $C$ is a linear combination of terms of the form $s^ib$ with $i\geq 1$ and $b\in\B$, $|b|\geq 1$. Furthermore, every non-zero entry of $A$ is a linear combination of elements in $\B\cup\{s^{-1}x^2,s^{-1}z^2,s^{-1}x^3\}$. By what we have shown above, $\tilde{m}(A,B,C)=0$, and we are done.
\end{proof}

\bigskip

\begin{small}{\scshape Mathematisches Institut, WWU M\"unster, Einsteinstr. 62, 48149 M\"unster, Germany}

{\itshape Email-address: }\ttfamily{martinlanger@uni-muenster.de}\end{small}

\end{document}